\newtheorem{theorem}{Theorem}[section]
\newtheorem{lemma}[theorem]{Lemma}
\newtheorem{corollary}[theorem]{Corollary}
\theoremstyle{definition}
\newtheorem{definition}[theorem]{Definition}
\newtheorem{example}[theorem]{Example}
\theoremstyle{remark}
\newtheorem*{remark}{Remark}
\numberwithin{equation}{section}
\newcommand{\RE}{\mbox{$\mathbb{R}$}}
\newcommand{\C}{\mbox{$\mathbb{C}$}}
\begin{document}

\title{The geometry of $m$-hyperconvex domains}

\author{Per \AA hag}\address{Department of Mathematics and Mathematical Statistics\\ Ume\aa \ University\\SE-901~87 Ume\aa \\ Sweden}\email{Per.Ahag@math.umu.se}

\author{Rafa\l\ Czy{\.z}}\address{Faculty of Mathematics and Computer Science, Jagiellonian University, \L ojasiewicza~6, 30-348 Krak\'ow, Poland}
\thanks{The second-named author was partially supported by NCN grant DEC-2013/08/A/ST1/00312.}\email{Rafal.Czyz@im.uj.edu.pl}

\author{Lisa Hed}\address{Department of Mathematics and Mathematical Statistics\\ Ume\aa \ University\\SE-901~87 Ume\aa \\ Sweden}\email{Lisa.Hed@math.umu.se}

\keywords{Barrier function, Caffarelli-Nirenberg-Spruck model, exhaustion function, $m$-subharmonic function, Jensen measure}
\subjclass[2010]{Primary 31C45, 32F17, 32U05; Secondary 31B25, 32U10, 32T35, 46J10, 46A20.}

\begin{abstract} We study the geometry of $m$-regular domains within the Caffarelli-Nirenberg-Spruck model in terms of barrier functions, envelopes, exhaustion functions, and Jensen measures. We prove among other things that every $m$-hyperconvex domain admits an exhaustion function that is negative, smooth, strictly $m$-subharmonic, and has bounded $m$-Hessian measure.
\end{abstract}

\maketitle

\section{Introduction}

 The geometry of the underlying space is usually essential when studying a given problem in analysis. The starting
 point of this paper is the model presented by Caffarelli et al.~\cite{CHS} in 1985 that makes it possible to investigate the transition between potential and pluripotential theories. Their construction relies on G\aa rding's
 research on hyperbolic polynomials~\cite{Gaarding}. The authors of~\cite{CHS} also provided a very nice application to special Lagrangian geometry,
 which was in itself introduced as an example within calibrated geometry~\cite{HarveyLawson1}. With the publications
 of~\cite{Blocki_weak}, and~\cite{Li}, many analysts and geometers got their attention to the Caffarelli-Nirenberg-Spruck model. To
 mention some references~\cite{DinewDinew, huang, L2, N, phong, WanWang2, zhou}. A usual assumption in these studies is that the underlying domain should admit a continuous exhaustion function that is $m$-subharmonic in the sense of Caffarelli et al. (see Section~\ref{sec_prelim} for the definition of $m$-subharmonic functions). In this paper we shall study the geometric properties of these domains. Let us now give a thorough background on the motivation behind this paper. It all starts with the following theorem:

 \bigskip

 \noindent \textbf{Theorem~A.} Assume that $\Omega$ is a bounded domain in $\RE^N$, $N\geq 2$. Then the following
 assertions are equivalent.

 \medskip

 \begin{enumerate}\itemsep2mm

 \item $\partial\Omega$ is \emph{regular} at every boundary point $y_0\in\partial\Omega$, in the sense that
 \[
\lim_{x\to y_0\atop x\in\Omega} \operatorname{PWB}_f(x) =f(y_0)\, ,
 \]
for each continuous function $f:\partial\Omega\to \RE$. Here
\[
\operatorname{PWB}_f(x)=\sup\Bigg\{v(x): v\in\mathcal{SH}(\Omega),\; \varlimsup_{\zeta\rightarrow\xi \atop
\zeta\in\Omega}v(\zeta)\leq f(\xi)\, , \;\; \forall \xi\in\partial\Omega\Bigg\}\, ,
\]
and $\mathcal{SH}(\Omega)$ is the space of subharmonic functions defined on $\Omega$;

 \item $\partial\Omega$ has a \emph{strong barrier} at every point $y_0\in\partial\Omega$ that
 is subharmonic, i.e. there exists a subharmonic function $u:\Omega\to\RE$ such that
     \[
     \lim_{x\to y_0\atop x\in\Omega} u(x)=0\, ,
     \]
     and
     \[
      \limsup_{x\to y\atop x\in\Omega} u(x)<0 \qquad  \text{ for all }  y\in\bar{\Omega}\backslash\{y_0\}\, .
     \]

 \item $\partial\Omega$ has a \emph{weak barrier} at every point $y_0\in\partial\Omega$  that
 is subharmonic, i.e. there exists a subharmonic function $u:\Omega\to\RE$ such that $u<0$ on $\Omega$ and
     \[
     \lim_{x\to y_0\atop x\in\Omega} u(x)=0\, .
     \]

 \item $\Omega$ admits an \emph{exhaustion function} that is negative and subharmonic,
        i.e.  there exists a non-constant function $\psi:\Omega\to\RE$ such that for any $c\in\RE$ the set $\{x\in\Omega:\psi(x)<c\}$ is relatively compact in $\Omega$. Furthermore, the exhaustion function should be negative and subharmonic.

 \item $\partial\Omega$ is equal to the \emph{Jensen boundary} w.r.t. the Jensen measures generated by the cone of functions that is continuous on $\bar\Omega$, and subharmonic on $\Omega$ (see Section~\ref{sec_prelim} for definitions).

 \end{enumerate}

\bigskip

\noindent The idea of a regular boundary point can be traced back to 1911 and 1912 with the works of Zaremba~\cite{zaremba} and Lebesgue~\cite{Lebesgue1}, respectively, when they constructed examples that exhibit the existence of irregular points. A decade after these examples Perron  introduced in 1923 the celebrated envelope construction $\operatorname{PWB}_{f}$ (see condition (1)). The work on $\operatorname{PWB}_{f}$ was later continued by Wiener~\cite{wiener1,wiener2,wiener3}, and  in our setting concluded by Brelot~\cite{brelot} in 1939. The notion of barrier goes further back in time; it can be found in the work of Poincar\'{e}~\cite{Poincare} from 1890. The implication (3) $\Rightarrow$ (1) is due to Bouligand~\cite{bouligand} who generalized a result of Lebesgue~\cite{Lebesgue2}. The equivalence with assertion (5) originates from the study of function algebras known as Choquet theory, which was developed in
the 50's and 60's by Bauer, Bishop, Choquet, de Leeuw, and others (see e.g.~\cite{edwards,Gamelin,GamelinSibony} and the references therein). For a beautiful treatise on Choquet theory we highly recommend~\cite{Lukes et al}.

\bigskip

 Inspired by the beauty of the equivalences in Theorem~A, analysts started to investigate these notions within the model introduced by Lelong~\cite{lelong} and Oka~\cite{oka} in 1942,  where subharmonic functions are changed to plurisubharmonic functions. The unit polydisc in $\C^n$, $n\geq 2$, shows that the notions of weak and strong barrier for plurisubharmonic functions are not equivalent. Instead we have Theorem~B and~Theorem~C below, where we  assume that $n\geq 2$. If $n=1$, then the two theorems become Theorem~A since
 subharmonic functions are then the same as plurisubharmonic functions.

 \bigskip

 \noindent \textbf{Theorem~B.}  Assume that $\Omega$ is a bounded domain in $\C^n$, $n\geq 2$. Then the following assertions are equivalent.

 \medskip

 \begin{enumerate}\itemsep2mm

 \item $\partial\Omega$ is \emph{B-regular} at every boundary point $z_0\in\partial\Omega$, in the sense that
 \[
\lim_{z\to z_0\atop z\in\Omega} \operatorname{PB}_f(z) =f(z_0)\, ,
 \]
for each continuous function $f:\partial\Omega\to \RE$. Here
\[
\operatorname{PB}_f(z)=\sup\Bigg\{v(z): v\in\mathcal{PSH}(\Omega),\; \varlimsup_{\zeta\rightarrow\xi \atop
\zeta\in\Omega}v(\zeta)\leq f(\xi)\, , \;\; \forall \xi\in\partial\Omega\Bigg\}\, .
\]
Here $\mathcal{PSH}(\Omega)$ is the space of plurisubharmonic functions defined on $\Omega$;

\item $\partial\Omega$ has a strong barrier at every point that is plurisubharmonic;

\item $\Omega$ admits an exhaustion function $\varphi$ that is negative, smooth, plurisubharmonic, and such that
    $\left(\varphi(z)-|z|^2\right)$ is plurisubharmonic.

\item $\partial\Omega$ is equal to the Jensen boundary w.r.t. the Jensen measures generated by the cone of functions that is continuous on $\bar\Omega$, and plurisubharmonic on $\Omega$.

 \end{enumerate}

 \bigskip

\noindent In 1959, Bremermann~\cite{bremermann} adopted the idea from assertion (1) in Theorem~A to pluripotential theory (see (1) in Theorem~B). He named his construction the
Perron-Carath\'{e}odory function after the articles~\cite{carat,perron}. The name did not survive the passage of time, and now it is known as the Perron-Bremermann envelope.  Drawing inspiration from Choquet theory, and its representing measures~\cite{Gamelin,GamelinSibony,rickart}, Sibony proved Theorem~B in the article~\cite{S}, which was published in 1987. There he also put these conditions in connection with Catlin's property $(P)$, and the
$\bar{\partial}$-Neumann problem. The last condition in assertion $(3)$ means that we have that
\[
\sum_{j,k=1}^{n}\frac{\partial^2\varphi}{\partial z_j\partial \bar{z}_k} \alpha_j\bar{\alpha}_k\geq |\alpha|^2\,, \;\text{ for all }\alpha\in\C^n\, .
\]
Hence, one can interpret $\varphi$ as being \emph{uniformly} strictly plurisubharmonic.

\bigskip

 \noindent \textbf{Theorem~C.} Assume that $\Omega$ is a bounded domain in $\C^n$, $n\geq 2$. Then the following assertions are equivalent.

 \medskip

 \begin{enumerate}\itemsep2mm

 \item $\Omega$ is \emph{hyperconvex} in the sense that it admits an exhaustion function that is negative and plurisubharmonic;

 \item $\partial\Omega$ has a weak barrier at every point that is plurisubharmonic;

 \item $\Omega$ admits an exhaustion function that is negative, smooth and strictly plurisubharmonic;

 \item  for every $z\in \partial \Omega$,  and every Jensen measure $\mu$, which is generated by the cone of functions that is continuous on $\bar\Omega$, and plurisubharmonic on $\Omega$, we have that $\mu$ is carried by $\partial \Omega$.

 \end{enumerate}

\bigskip

\noindent Historically, the notion of hyperconvexity was introduced by Stehl\'{e} in 1974 in connection with the Serre conjecture, and later in 1981 Kerzman and Rosay~\cite{KR} proved the equivalence of the three first assertions (see also~\cite{aytuna}). Kerzman and Rosay also considered the question of which pseudoconvex domains are hyperconvex. We shall not address this question here (see e.g. the introduction of~\cite{avelin} for an up-to-date account of this question). Carlehed et al.~\cite{CCW} showed in 1999 the equivalence between (1) and (4). In connection with Theorem~B and Theorem~C we would like to mention the inspiring article~\cite{Blocki_MA} written B\l ocki, the first part of which is a self-contained survey on plurisubharmonic barriers and exhaustion functions in complex domains.

\bigskip

 As we mentioned at the beginning of this expos\'{e} the purpose of this paper is to study the geometry of the corresponding notions $B$-regular and hyperconvex domains within the Caffarelli-Nirenberg-Spruck model. More precisely, in Theorem~\ref{bm-reg} we prove what degenerates into Theorem~B when $m=n$, and in Theorem~\ref{thm_mhx} we prove what is Theorem~C in the case $m=n$, except for the corresponding implication $(1)\Rightarrow (3)$. This we prove
in Section~\ref{sec_smooth} due to the different techniques used, and the length of that proof. In the case when $m=1$, our
Theorem~\ref{bm-reg} and Theorem~\ref{thm_mhx} (together with Theorem~\ref{smoothexh}) merge into Theorem~A above with $N=2n$.

\bigskip

This article is organized as follows. In Section~\ref{sec_prelim} we shall state the necessary definitions and some preliminaries needed for this paper, and then
in Section~\ref{sec_basic} we shall prove some basic facts of $m$-hyperconvex domains (Theorem~\ref{thm_prop}). From Section~\ref{sec_basic}, and Theorem~\ref{thm_prop} we would like
the reader to take special note of property (3). Up until now authors have defined $m$-hyperconvex domains to be bounded domains
that admit an exhaustion function that is negative, continuous, and $m$-subharmonic. We prove that the assumption of continuity
is superfluous. This result is also the starting point of the proof of Theorem~\ref{smoothexh}. In Section~\ref{sec_geometry} we prove Theorem~\ref{bm-reg} and Theorem~\ref{thm_mhx}, as mentioned above, which correspond to Theorem~B and Theorem~C, respectively. We end this paper by showing that every $m$-hyperconvex domain admits a smooth and strictly $m$-subharmonic exhaustion function (Theorem~\ref{smoothexh}; see implication $(1)\Rightarrow (3)$ in Theorem~C).

\bigskip

We end this introduction by highlighting an opportunity for future studies related to this paper. As convex analysis
and pluripotential theory lives in symbiosis, Trudinger and Wang~\cite{TW2} draw its inspiration from the work of Caffarelli et al., and in 1999 they presented a model that makes it possible to study the transition between convex analysis and potential theory. For further information see e.g~\cite{TW1, TW2, TW3, Wang}. As~\cite{WanWang} indicates, further studies of the geometric properties of what could be named \emph{$k$-convex domains} are of interest. We leave these questions to others.

\bigskip

We want to thank Urban Cegrell, Per-H\aa kan Lundow, and H\aa kan Persson for inspiring discussions related to this paper.

\section{Preliminaries}\label{sec_prelim}

In this section we shall present the necessary definitions and fundamental facts needed for the rest of this paper.
For further information related to potential theory see e.g.~\cite{armitage,doob,landkof}, and for more information about pluripotential theory see e.g.~\cite{demailly_bok,K}. We also want to mention the highly acclaimed book written by H\"ormander called ``\emph{Notions of convexity}''~\cite{hormander}. Abdullaev and Sadullaev~\cite{SA} have written an article that can be used as an introduction to the Caffarelli-Nirenberg-Spruck model, as well as Lu's doctoral thesis~\cite{L}. We would like to point out that $m$-subharmonic functions in the sense
of Caffarelli et al. is not equivalent of being subharmonic on $m$-dimensional hyperplanes in $\C^n$ studies by others
(see e.g.~\cite{Abdullaev1,Abdullaev2}). For other models in connection to plurisubharmonicity see e.g.~\cite{HarveyLawson2,HarveyLawson3,HarveyLawson4}.

Let $\Omega \subset \C^n$ be a bounded domain, $1\leq m\leq n$, and define $\mathbb C_{(1,1)}$ to be the set of $(1,1)$-forms with constant coefficients. With this notation we define
\[
\Gamma_m=\left\{\alpha\in \mathbb C_{(1,1)}: \alpha\wedge \beta^{n-1}\geq 0, \dots , \alpha^m\wedge \beta ^{n-m}\geq 0   \right\}\, ,
\]
where $\beta=dd^c|z|^2$ is the canonical K\"{a}hler form in $\C^n$.

\begin{definition}\label{m-sh} Assume that $\Omega \subset \C^n$ is a bounded domain, and let $u$ be a subharmonic function defined  $\Omega$. Then we say that $u$ is \emph{$m$-subharmonic}, $1\leq m\leq n$, if the following inequality holds
\[
dd^cu\wedge\alpha_1\wedge\dots\wedge\alpha_{m-1}\wedge\beta^{n-m}\geq 0\, ,
\]
in the sense of currents for all $\alpha_1,\ldots,\alpha_{m-1}\in \Gamma_m$. With $\mathcal{SH}_m(\Omega)$ we denote the set of all $m$-subharmonic functions defined on $\Omega$. We say that a function $u$ is \emph{strictly $m$-subharmonic} if it is $m$-subharmonic on $\Omega$, and for every $p \in \Omega$ there exists a constant $c_p >0$ such that $u(z)-c_p |z| ^2$ is $m$-subharmonic in a neighborhood of $p$.
\end{definition}
\begin{remark} From Definition~\ref{m-sh} it follows that
\[
\mathcal{PSH}=\mathcal{SH}_n \subset \cdots \subset \mathcal{SH}_1 =\mathcal{SH}\, .
\]
\end{remark}

\bigskip

In Theorem~\ref{thm_basicprop1} we give a list of well-known properties that $m$-subharmonic functions enjoy.

\bigskip

\begin{theorem} \label{thm_basicprop1} Assume that $\Omega \subset \C^n$ is a bounded domain, and $1\leq m\leq n$. Then we have that

\medskip

    \begin{enumerate}\itemsep2mm
    \item if $u,v\in \mathcal{SH}_m(\Omega)$, then $su+tv\in \mathcal{SH}_m(\Omega)$, for constants $s,t\geq 0$;

    \item if $u,v\in \mathcal{SH}_m(\Omega)$, then $\max\{u,v\}\in \mathcal{SH}_m(\Omega)$;

    \item if $\{u_{\alpha}\}$ is a locally uniformly bounded family of functions from $ \mathcal{SH}_m(\Omega)$, then the upper semicontinuous regularization
        \[
        \left(\sup_{\alpha}u_{\alpha}\right)^*
        \]
        defines a $m$-subharmonic function;

    \item if $\{u_j\}$ is a sequence of functions in $\mathcal{SH}_m(\Omega)$ such that $u_j \searrow u$ and there is a point $z \in \Omega$ such that $u(z) > - \infty$, then $u \in \mathcal{SH}_m(\Omega)$;

    \item if $u\in \mathcal{SH}_m(\Omega)$ and $\gamma:\mathbb R\to\mathbb R$ is a convex and nondecreasing function, then $\gamma\circ u\in\mathcal{SH}_m(\Omega)$;

    \item if $u\in \mathcal{SH}_m(\Omega)$, then the standard regularization given by the convolution $u\star \rho_{\varepsilon}$ is $m$-subharmonic in $\{z\in \Omega:\operatorname{dist}(z,\partial \Omega)>\varepsilon\}$. Here we have that
    \[
    \rho_{\varepsilon}=\varepsilon^{-2n}\rho\left(\frac z{\varepsilon}\right)\, ,
    \]
    $\rho:\mathbb R_+\to \mathbb R_+$ is a smooth function such that $\rho(z)=\rho(|z|)$ and
    \[
    \rho(t)=\begin{cases}
    \frac {C}{(1-t)^2}\exp\left({\frac {1}{t-1}}\right) & \text{ when } t\in [0,1]\\
    0 & \text{ when } t\in (1,\infty)\, ,
    \end{cases}
    \]
    where $C$ is a constant such that $\int_{\mathbb C^n}\rho(|z|^2)\beta^n=1$;

\item if $\omega\Subset\Omega$, $u\in \mathcal{SH}_m(\Omega)$, $v\in \mathcal{SH}_m(\omega)$, and $\varlimsup_{z\to w}v(z)\leq u(w)$ for all $w\in \partial \omega$, then the function defined by
\[
\varphi=\begin{cases}
u, \, \text { on } \, \Omega\setminus \omega\, ,\\
\max\{u,v\}, \; \text { on } \, \omega,
\end{cases}
\]
is $m$-subharmonic on $\Omega$;
   \end{enumerate}
\end{theorem}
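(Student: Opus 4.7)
The plan is to treat these as routine consequences of the definition, leaning on three underlying facts: the linearity of $dd^c$, the fact that the cone of positive $(1,1)$-forms that wedge with $\alpha_1\wedge\cdots\wedge\alpha_{m-1}\wedge\beta^{n-m}$ (for every $\alpha_j\in\Gamma_m$) to give non-negative currents is itself a convex cone, and standard smooth approximation via the convolutions in (6). Each item then reduces to either a direct computation in the smooth category followed by passage to the limit, or to a pure distributional manipulation.

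First I would dispose of the easy items. Property (1) is immediate from linearity of $dd^c$ on currents. Property (4) uses the classical fact that a non-trivial decreasing limit of subharmonic functions is subharmonic, combined with weak convergence $dd^c u_j\to dd^c u$ as currents, which preserves the defining inequality when tested against the positive form $\alpha_1\wedge\cdots\wedge\alpha_{m-1}\wedge\beta^{n-m}$. Property (3) is Choquet's lemma: pass to a countable subfamily having the same u.s.c. regularization, take finite maxima (which lie in $\mathcal{SH}_m(\Omega)$ by (2)), and apply (4) together with standard arguments on u.s.c.\ envelopes. Property (7) is local; interior to either $\omega$ or $\Omega\setminus\bar\omega$ there is nothing to check, and near $\partial\omega\cap\Omega$ the boundary hypothesis $\varlimsup_{z\to w}v(z)\leq u(w)$ lets one extend $v$ to the outside as $u$ in an upper semicontinuous way and fall back on (2).

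The items needing a small computation are (2), (5), and (6). For (6), I would write $u\star\rho_\varepsilon(z)=\int u(z-w)\rho_\varepsilon(w)\,d\lambda(w)$ as an integral of translates of $u$ against a non-negative kernel; each translate lies in $\mathcal{SH}_m$ on the appropriate open set, and commuting the integral with the wedge against $\alpha_1\wedge\cdots\wedge\alpha_{m-1}\wedge\beta^{n-m}$ (an instance of (1) in its integral form) gives the claim. For (5) in the smooth case, the identity
\[
dd^c(\gamma\circ u)=\gamma'(u)\,dd^c u+\gamma''(u)\,du\wedge d^c u
\]
expresses $dd^c(\gamma\circ u)$ as a non-negative combination of $dd^c u$ and the positive $(1,1)$-form $du\wedge d^c u$; since $du\wedge d^c u\in\Gamma_m$ at every point, wedging with the required test form is non-negative, and the general case then follows by approximating both $u$ (via (6)) and $\gamma$ (by smooth convex non-decreasing mollifications) and invoking (4). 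For (2), I would approximate $\max\{u,v\}$ by the smooth regularized maximum $\max_\varepsilon\{u,v\}=u+\chi_\varepsilon(v-u)$ with $\chi_\varepsilon$ a convex non-decreasing smoothing of $t\mapsto\max\{t,0\}$; direct differentiation again exhibits $dd^c\max_\varepsilon\{u,v\}$ as a non-negative combination of $dd^c u$, $dd^c v$, and a form of type $d(v-u)\wedge d^c(v-u)$, after which (4) and (6) finish the job.

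The single real technical point hiding in the argument is the closure property of the cone mentioned above, namely that $du\wedge d^c u$ together with any $dd^c u$ for $u\in\mathcal{SH}_m$ produces a current that is still non-negative after wedging with $\alpha_1\wedge\cdots\wedge\alpha_{m-1}\wedge\beta^{n-m}$; this is a structural property of $\Gamma_m$ flagged in the original paper of Caffarelli--Nirenberg--Spruck and is where all the non-trivial positivity enters. Once this is granted, every remaining step in the seven items is a standard approximation or a distributional manipulation, consistent with the authors' billing of the theorem as a collection of well-known facts.
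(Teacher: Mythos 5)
The paper does not actually prove Theorem~\ref{thm_basicprop1}: it is stated as a list of well-known properties, with the proofs implicitly delegated to the literature (B{\l}ocki's paper on weak solutions to the complex Hessian equation, Abdullaev--Sadullaev, Lu's thesis). So there is no in-paper argument to compare against; what can be said is that your sketch reproduces the standard proofs essentially correctly, and that you have put your finger on the right crucial ingredient, namely G\aa rding's inequality for the cone $\Gamma_m$ (that $\alpha_1\wedge\cdots\wedge\alpha_m\wedge\beta^{n-m}\geq 0$ whenever all $\alpha_j\in\Gamma_m$), which is what makes $du\wedge d^cu$ and $dd^cv$ admissible summands in the computations for (2) and (5). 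Two small corrections. First, in (3) you cannot invoke (4), which concerns \emph{decreasing} sequences: after Choquet's lemma the finite maxima form an \emph{increasing} sequence $v_j\nearrow v$, and one instead uses that $v=v^*$ almost everywhere, so $v_j\to v^*$ in $L^1_{\mathrm{loc}}$ and the current inequality passes to the limit; the decreasing case (4) also really uses $L^1_{\mathrm{loc}}$ convergence rather than just ``weak convergence of $dd^cu_j$'' as a primitive. Second, $du\wedge d^cu$ is not literally an element of $\Gamma_m$ (which by the paper's definition consists of \emph{constant-coefficient} $(1,1)$-forms); the correct statement is that at each point it is a positive semidefinite $(1,1)$-form, hence lies in the closed cone $\overline{\Gamma}_m$ pointwise, and G\aa rding's inequality applies pointwise. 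With those repairs your outline is sound, and the order of deduction you propose ((1), (4), (6) first, then (2), (3), (5), (7)) avoids circularity.
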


We shall need several different envelope constructions. We have gathered their definitions and notations in Definition~\ref{def}.

\begin{definition}\label{def}
Assume that $\Omega \subset \C^n$ is a bounded domain, and $1\leq m\leq n$.

\begin{itemize}\itemsep2mm
\item[$a)$] For $f\in \mathcal C(\bar\Omega)$ we define
\[
\textbf {S}_f(z)=\sup\left\{v(z): v\in \mathcal {SH}_m(\Omega), v\leq f\right\}\, ,
\]
and similarly
\[
\textbf {S}^c_f(z)=\sup\left\{v(z): v\in \mathcal {SH}_m(\Omega)\cap\mathcal C(\bar\Omega), v\leq f\right\}\, .
\]
\item[$b)$] If instead $f\in \mathcal C(\partial \Omega)$, then we let
\[
\textbf {S}_f(z)=\sup\left\{v(z): v\in \mathcal {SH}_m(\Omega), v^*\leq f\, \text { on }\, \partial \Omega\right\}\, ,
\]
and
\[
\textbf {S}^c_f(z)=\sup\left\{v(z): v\in \mathcal {SH}_m(\Omega)\cap \mathcal C(\bar\Omega), v\leq f\, \text { on }\, \partial \Omega\right\}.
\]
\end{itemize}
\end{definition}

\begin{remark}
If $\Omega\subset\C^n$ $(\cong \RE^{2n})$ is a regular domain in the sense of Theorem~A, and if $f\in \mathcal C(\partial \Omega)$, then $\operatorname{PWB}_f$ (defined also in Theorem~A) is the unique harmonic function on $\Omega$, continuous on $\bar \Omega$, such that $\operatorname{PWB}_f=f$ on $\partial \Omega$. Therefore, we have that $\textbf{S}_f(z)=\textbf{S}_{\operatorname{PWB}_f}(z)$, and $\textbf {S}^c_f(z)=\textbf{S}^c_{\operatorname{PWB}_f}(z)$.
\end{remark}

In Definition~\ref{def2} we state the definition of relative extremal functions in our setting.

\begin{definition}\label{def2}
Assume that $E\Subset\Omega$ is an open subset such that $\Omega\setminus \bar E$ is a regular domain in the sense of Theorem~A. Then we make the following definitions
\[
\textbf {S}_E(z)=\sup\left\{v(z): v\in \mathcal {SH}_m(\Omega), v\leq -1\, \text { on }\, E, v\leq 0 \right\}\, ,
\]
and
\[
\textbf {S}^c_E(z)=\sup\left\{v(z): v\in \mathcal {SH}_m(\Omega)\cap \mathcal C(\bar \Omega), v\leq -1\, \text { on }\, E, v\leq 0 \right\}.
\]
\end{definition}
\begin{remark}
From well-known potential theory we have that if $h_E$ is the unique harmonic function defined on $\Omega\setminus \bar E$, continuous on $\bar \Omega\setminus E$, $h_E=0$ on $\partial \Omega$, $h_E=-1$ on $\partial E$, and if we set
\[
\textbf{H}_E(z)=\begin{cases}
h_E(z) & \text{ if } z\in \bar \Omega \setminus E \\
-1& \text{ if } z\in E\, ,
\end{cases}
\]
then we have that $\textbf {S}_E(z)=\textbf{S}_{\textbf{H}_E}(z)$ and $\textbf {S}^c_E(z)=\textbf{S}^c_{\textbf{H}_E}(z)$.
\end{remark}

B\l ocki's generalization of Walsh's celebrated theorem~\cite{walsh}, and an immediate consequence will be needed as well.

\begin{theorem}\label{walsh}
Let $\Omega$ be a bounded domain in $\mathbb C^n$, and let $f\in \mathcal C(\bar \Omega)$.
If for all $w\in \partial \Omega$ we have that $\lim_{z\to w}\textbf{S}_f(z)=f(w)$, then $\textbf{S}_f\in \mathcal{SH}_m(\Omega)\cap \mathcal C(\bar\Omega)$.
\end{theorem}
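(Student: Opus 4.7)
The plan is a translation argument of Walsh type. Write $U=\textbf{S}_f$ for brevity. First I would show that $U$ itself is $m$-subharmonic (hence upper semicontinuous) on $\Omega$. By property~(3) of Theorem~\ref{thm_basicprop1}, the upper regularization $U^{*}$ lies in $\mathcal{SH}_m(\Omega)$. Since $f$ is continuous on $\bar\Omega$ and every competitor $v$ satisfies $v\leq f$ on $\Omega$, also $U^{*}\leq f$, so $U^{*}$ belongs to the defining family itself and $U^{*}\leq U$. Combined with the trivial $U\leq U^{*}$ this gives $U=U^{*}\in\mathcal{SH}_m(\Omega)$.

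For the continuity, fix $\epsilon>0$, use uniform continuity of $f$ on $\bar\Omega$ to pick $\delta_1>0$ with $|f(x)-f(y)|\leq\epsilon$ for $|x-y|\leq\delta_1$, and combine the boundary hypothesis with $U\leq f$ and the compactness of $\partial\Omega$ to produce $\delta_2>0$ with $U(z)\geq f(z)-\epsilon$ whenever $\operatorname{dist}(z,\partial\Omega)<\delta_2$. Set $\delta=\min(\delta_1,\delta_2)$, and for $a\in\C^n$ with $|a|<\delta$ define
$$
\psi_a(z)=\begin{cases}\max\bigl(U(z),\,U(z+a)-2\epsilon\bigr)&\text{if }z,\,z+a\in\Omega,\\ U(z)&\text{if }z\in\Omega,\,z+a\notin\Omega.\end{cases}
$$
Uniform continuity yields $U(z+a)-2\epsilon\leq f(z+a)-2\epsilon\leq f(z)-\epsilon$, so $\psi_a\leq f$ on $\Omega$. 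Granting $\psi_a\in\mathcal{SH}_m(\Omega)$, the function $\psi_a$ lies in the defining family of $U$, so $\psi_a\leq U$, giving $U(z+a)-U(z)\leq 2\epsilon$ whenever $z,\,z+a\in\Omega$; swapping $z$ and $z+a$ provides the matching lower estimate, and hence $U$ is uniformly continuous on $\Omega$. The boundary hypothesis then extends $U$ continuously to $\bar\Omega$.

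The crux is the $m$-subharmonicity of $\psi_a$. I would apply the gluing construction in property~(7) of Theorem~\ref{thm_basicprop1} with $u=U$ on $\Omega$ and $v=U(\cdot+a)-2\epsilon$ on the open set $\omega=\Omega\cap(\Omega-a)$: at any $z\in\partial\omega\cap\Omega$ one has $z+a\in\partial\Omega$, so $\operatorname{dist}(z,\partial\Omega)\leq|a|<\delta_2$, and
$$
\varlimsup_{z'\to z,\,z'\in\omega}\bigl[U(z'+a)-2\epsilon\bigr]\leq f(z+a)-2\epsilon\leq f(z)-\epsilon\leq U(z),
$$
which is exactly the boundary compatibility required by the gluing. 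The main obstacle is that $\omega$ is not relatively compact in $\Omega$, so property~(7) does not apply verbatim; I expect to resolve this either by invoking the fact that $m$-subharmonicity is a local condition (so the gluing is valid on a neighborhood of every point) or by exhausting $\omega$ by relatively compact open subsets and passing to a limit of the resulting $m$-subharmonic functions.
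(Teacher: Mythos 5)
The paper itself offers no argument here: it simply cites Proposition~3.2 of~\cite{Blocki_weak}, which is exactly the Walsh-type translation proof you are reconstructing, so your overall route is the right one (and the first step, $\textbf{S}_f=\textbf{S}_f^{*}\in\mathcal{SH}_m(\Omega)$, is correct; just note that to invoke property~(3) of Theorem~\ref{thm_basicprop1} you should first replace each competitor $v$ by $\max\{v,\min_{\bar\Omega}f\}$ so that the family is locally uniformly bounded, which does not change the supremum).

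The one real issue is the step you flag, and your two proposed repairs are not equally good. The exhaustion repair does not work as stated: for $\omega_j\Subset\omega=\Omega\cap(\Omega-a)$ the compatibility inequality $\varlimsup v\leq u$ required by property~(7) will in general fail on the portion of $\partial\omega_j$ lying in the interior of $\omega$, where $U(\cdot+a)-2\epsilon$ may well exceed $U$, so the glued functions $\psi_{a,j}$ need not be $m$-subharmonic and there is nothing to pass to the limit with. The locality repair, however, can be made precise, and then no gluing lemma is needed at all. With your choice of $\delta_2$, set $V=\{z\in\Omega:\operatorname{dist}(z,\partial\Omega)<\delta_2\}$. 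If $z\in\omega\cap V$ and $|a|<\delta=\min(\delta_1,\delta_2)$, then
\[
U(z+a)-2\epsilon\leq f(z+a)-2\epsilon\leq f(z)-\epsilon\leq U(z)\, ,
\]
so $\max\{U(z),U(z+a)-2\epsilon\}=U(z)$ there; and if $z\in\Omega\setminus\omega$ then $z+a\notin\Omega$, hence $\operatorname{dist}(z,\partial\Omega)\leq|a|<\delta_2$, i.e. $\Omega\setminus\omega\subset V$. Consequently $\Omega=\omega\cup V$, $\psi_a$ coincides with $\max\{U,U(\cdot+a)-2\epsilon\}\in\mathcal{SH}_m(\omega)$ on $\omega$ (translation invariance of $\mathcal{SH}_m$ plus property~(2)) and with $U\in\mathcal{SH}_m(V)$ on $V$, the two descriptions agreeing on $\omega\cap V$; since $m$-subharmonicity is a local property, $\psi_a\in\mathcal{SH}_m(\Omega)$. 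With this observation the rest of your argument (the $2\epsilon$-oscillation bound, uniform continuity, and the continuous extension to $\bar\Omega$ with boundary values $f$) goes through and yields the theorem.
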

\begin{proof} See Proposition~3.2 in~\cite{Blocki_weak}.

\end{proof}

A direct consequence of Theorem~\ref{walsh} is the following.

\begin{corollary}\label{walsh2}
Let $\Omega$ be a bounded domain in $\mathbb C^n$, and let $f\in \mathcal C(\bar \Omega)$.
If for all $w\in \partial \Omega$ we have that $\lim_{z\to w}\textbf {S}^c_f(z)=f(w)$, then $\textbf {S}^c_f=\textbf {S}_f\in \mathcal{SH}_m(\Omega)\cap \mathcal C(\bar\Omega)$.
\end{corollary}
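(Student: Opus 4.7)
The plan is to reduce the corollary to a direct application of Theorem~\ref{walsh} applied to the a priori larger envelope $\textbf{S}_f$. Once boundary continuity of $\textbf{S}_f$ is established, Theorem~\ref{walsh} will deliver $\textbf{S}_f \in \mathcal{SH}_m(\Omega) \cap \mathcal C(\bar\Omega)$; at that point $\textbf{S}_f$ itself becomes an admissible competitor in the supremum defining $\textbf{S}^c_f$, which forces $\textbf{S}^c_f = \textbf{S}_f$.

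Concretely, since $\mathcal{SH}_m(\Omega) \cap \mathcal C(\bar\Omega) \subset \mathcal{SH}_m(\Omega)$, the trivial chain $\textbf{S}^c_f \leq \textbf{S}_f \leq f$ holds on $\Omega$. For any $w \in \partial\Omega$, the right inequality together with continuity of $f$ at $w$ gives $\limsup_{z \to w} \textbf{S}_f(z) \leq f(w)$, while the hypothesis $\lim_{z \to w} \textbf{S}^c_f(z) = f(w)$ combined with the left inequality gives $\liminf_{z \to w} \textbf{S}_f(z) \geq f(w)$. Sandwiching these bounds, I obtain $\lim_{z \to w} \textbf{S}_f(z) = f(w)$ for every $w \in \partial\Omega$, so Theorem~\ref{walsh} applies and yields $\textbf{S}_f \in \mathcal{SH}_m(\Omega) \cap \mathcal C(\bar\Omega)$ with continuous extension to $\bar\Omega$ satisfying $\textbf{S}_f = f$ on $\partial\Omega$.

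To close the argument, I would then observe that $\textbf{S}_f$, being continuous on $\bar\Omega$, $m$-subharmonic on $\Omega$, and bounded above by $f$ on $\bar\Omega$ (using $\textbf{S}_f \leq f$ on $\Omega$ together with the just-established boundary equality), is itself admissible for the supremum defining $\textbf{S}^c_f$. Hence $\textbf{S}_f \leq \textbf{S}^c_f$, and equality follows from the trivial inequality. I do not anticipate a real obstacle: the proof is essentially a two-sided squeeze followed by an invocation of Theorem~\ref{walsh}. The only mildly delicate point to verify is that $\textbf{S}_f$ is $\leq f$ on all of $\bar\Omega$, not merely on $\Omega$, but this is immediate from the boundary value $\textbf{S}_f(w) = f(w)$ for $w\in\partial\Omega$.
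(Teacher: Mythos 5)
Your proof is correct and follows essentially the same route as the paper: establish the sandwich $\textbf{S}^c_f\leq \textbf{S}_f\leq f$, deduce the boundary limit for $\textbf{S}_f$ from the hypothesis, invoke Theorem~\ref{walsh}, and then use admissibility of the resulting continuous $\textbf{S}_f$ to get the reverse inequality $\textbf{S}_f\leq\textbf{S}^c_f$. Your extra care about $\textbf{S}_f\leq f$ on all of $\bar\Omega$ is a fair point the paper leaves implicit, but no new ideas are involved.
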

\begin{proof} First note that
\[
\textbf {S}^c_f\leq \textbf {S}_f\leq f\, .
\]
Therefore,  if
\[
\lim_{z\to w}\textbf {S}^c_f(z)=f(w)\, ,
\]
holds for all $w\in \partial \Omega$, then
\[
\lim_{z\to w}\textbf {S}_f(z)=f(w)\, .
\]
Hence, by Theorem~\ref{walsh} we get that $\textbf {S}_f\in \mathcal{SH}_m(\Omega)\cap \mathcal C(\bar\Omega)$, which
gives us that $\textbf {S}_f\leq \textbf {S}^c_f$. Thus, $\textbf {S}_f=\textbf {S}^c_f$.
\end{proof}

In Section~\ref{sec_geometry}, we shall make use of techniques from Choquet theory, in particular Jensen measures w.r.t.
the cone $\mathcal{SH}_m(\Omega) \cap \mathcal C(\bar \Omega)$ of continuous functions. This is possible since $\mathcal{SH}_m(\Omega) \cap \mathcal C(\bar \Omega)$ contains the constant functions and separates points in $\mathcal C(\bar \Omega)$. Our inspiration can be traced back to the works mentioned in the introduction, but maybe more to~\cite{CCW} and~\cite{HP}.

\begin{definition}\label{def_JzmO} Let $\Omega$ be a bounded domain in $\C^n$, and let $\mu$ be a non-negative regular Borel measure defined on $\bar{\Omega}$. We say that $\mu$ is a \emph{Jensen measure with barycenter} $z_0\in\bar{\Omega}$ \emph{w.r.t.} $\mathcal{SH}_m(\Omega) \cap \mathcal C(\bar \Omega)$ if
  \[
  u(z_0)\leq \int_{\bar{\Omega}} u \, d\mu \qquad\qquad \text{for all } u  \in \mathcal{SH}_m(\Omega) \cap \mathcal C(\bar \Omega)\, .
  \]
The set of such measures will be denoted by $\mathcal{J}_{z_0}^m$. Furthermore, the  \emph{Jensen boundary} w.r.t. $\mathcal{J}_{z_0}^m$ is defined as
\[
\partial_{\mathcal{J}^m}=\left\{z\in\bar{\Omega}: \mathcal{J}_{z}^m=\{\delta_z\} \right\}\, .
\]
\end{definition}
\begin{remark} The Jensen boundary is another name for the Choquet boundary w.r.t. a given class of Jensen measures. For further information see e.g.~\cite{brelot2,Lukes et al}.
\end{remark}
\begin{remark} There are many different spaces of Jensen measures introduced throughout the literature. Caution is advised.
\end{remark}

The most important tool in working with Jensen measures is the Edwards' duality theorem that origins from~\cite{edwards}. We only need a special case formulated
in Theorem~\ref{eq_edwardversion2}. For a proof, and a discussion, of Edwards' theorem see~\cite{W} (see also~\cite{ColeRansford1,ColeRansford2,Ransford}).

\begin{theorem}[Edwards' Theorem]\label{eq_edwardversion2}
Let $\Omega$ be a bounded domain in $\C^n$, and let $g$ be a real-valued  lower semicontinuous function defined on $\bar \Omega$. Then for every $z\in\bar{\Omega}$ we have that
\[
\textbf{S}_g^c(z)=\sup\{v(z):v \in \mathcal{SH}_m(\Omega) \cap \mathcal C(\bar \Omega), v \leq g \}=\inf\left \{\int g \, d\mu : \mu \in \mathcal{J}_z^m\right\}\, .
\]
\end{theorem}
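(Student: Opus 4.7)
The plan is to carry out the classical Hahn--Banach / Riesz representation argument underlying Edwards' duality. Write $\mathcal{F}:=\mathcal{SH}_m(\Omega)\cap\mathcal{C}(\bar{\Omega})$, and abbreviate the two expressions in the theorem as
\[
P_g(z)=\sup\{v(z):v\in\mathcal{F},\ v\leq g\}=\textbf{S}^c_g(z),\qquad Q_g(z)=\inf\left\{\int g\,d\mu:\mu\in\mathcal{J}_z^m\right\}.
\]
The inequality $P_g(z)\leq Q_g(z)$ is immediate: given $v\in\mathcal{F}$ with $v\leq g$ and $\mu\in\mathcal{J}_z^m$, the Jensen property gives $v(z)\leq\int v\,d\mu\leq\int g\,d\mu$; one then takes the sup in $v$ and the inf in $\mu$.

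For the reverse inequality $Q_g(z)\leq P_g(z)$ I would first treat the case when $g$ is continuous. On $\mathcal{C}(\bar{\Omega})$ define the superlinear functional $p(h):=\sup\{v(z):v\in\mathcal{F},\ v\leq h\}$. Because $\mathcal{F}$ is a convex cone containing the constants, $p$ is positively homogeneous, superadditive, monotone, and satisfies $p(c)=c$ for constants $c\in\RE$, whence $|p(h)|\leq\|h\|_\infty$ (using $p(h)\leq h(z)$ applied to $\pm h$). Apply the superlinear form of the Hahn--Banach theorem on the one-dimensional subspace $\RE\cdot g$ with the linear form $\alpha g\mapsto\alpha p(g)$ (which dominates $p$ there, since $p(-g)\leq -p(g)$ by concavity) to obtain a linear extension $\Lambda:\mathcal{C}(\bar{\Omega})\to\RE$ with $\Lambda\geq p$ and $\Lambda(g)=p(g)$. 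By Riesz representation, $\Lambda(h)=\int_{\bar{\Omega}}h\,d\mu$ for a regular Borel measure $\mu$ on $\bar{\Omega}$, which is positive (since $p(h)\geq 0$ when $h\geq 0$, via the competitor $v\equiv 0$), has total mass $1$ (since $\Lambda(1)\geq p(1)=1$ and $\Lambda(1)=-\Lambda(-1)\leq -p(-1)=1$), and satisfies $\int v\,d\mu=\Lambda(v)\geq p(v)\geq v(z)$ for every $v\in\mathcal{F}$. Thus $\mu\in\mathcal{J}_z^m$ and $\int g\,d\mu=p(g)=P_g(z)$, which forces $Q_g(z)\leq P_g(z)$.

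For a general lower semicontinuous $g$ on the compact set $\bar{\Omega}$, $g$ is bounded below, so I can write $g=\sup_n f_n$ with $f_n\in\mathcal{C}(\bar{\Omega})$, $f_n\nearrow g$ (for instance via the Lipschitz inf-convolutions $f_n(x)=\inf_y(g(y)+n|x-y|)$). The continuous case yields $\mu_n\in\mathcal{J}_z^m$ with $\int f_n\,d\mu_n=P_{f_n}(z)\leq P_g(z)$. Since the $\mu_n$ are probability measures on the compact space $\bar{\Omega}$, Banach--Alaoglu supplies a weak-$*$ cluster point $\mu$; as $\mu\mapsto\int v\,d\mu$ is weak-$*$ continuous for continuous $v$, the set $\mathcal{J}_z^m$ is weak-$*$ closed, hence $\mu\in\mathcal{J}_z^m$. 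For any fixed $k$ and $n\geq k$ one has $\int f_k\,d\mu_n\leq\int f_n\,d\mu_n\leq P_g(z)$; letting $n\to\infty$ gives $\int f_k\,d\mu\leq P_g(z)$, and monotone convergence as $k\to\infty$ then yields $\int g\,d\mu\leq P_g(z)$, so $Q_g(z)\leq P_g(z)$.

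The main technical bookkeeping lies in the Hahn--Banach step: verifying that the abstractly produced linear functional is represented by a \emph{probability} measure that actually belongs to $\mathcal{J}_z^m$. All three checks---positivity, total mass one, and the Jensen inequality---reduce to the elementary identities $p(1)=1$, $p(-1)=-1$, and $p(v)\geq v(z)$ for $v\in\mathcal{F}$ (the last by taking the competitor $w=v$ in the definition of $p$). After that, the passage from continuous to lower semicontinuous $g$ is a routine approximation driven by weak-$*$ compactness and monotone convergence.
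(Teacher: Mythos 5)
Your argument is correct, and it is worth noting that the paper does not actually prove this theorem: it only states it and refers to Wikstr\"om~\cite{W} (and to Edwards' original paper and Cole--Ransford) for the proof. What you have written is essentially the standard duality argument that those references carry out: the trivial inequality from the Jensen property; for continuous $g$ the concave envelope functional $p$, the superlinear (equivalently, sublinear after replacing $p$ by $q(h)=-p(-h)$) Hahn--Banach extension from the line $\RE\cdot g$, Riesz representation, and the three elementary checks ($p(h)\ge 0$ for $h\ge 0$ via $v\equiv 0$, $p(\pm 1)=\pm 1$ for the mass, and $p(v)\ge v(z)$ via the competitor $v$ itself) showing the representing measure lies in $\mathcal{J}_z^m$ and attains the value $p(g)$; then the passage to lower semicontinuous $g$ by an increasing sequence of continuous functions, weak-$^\ast$ compactness, and monotone convergence. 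All the steps you invoke are sound: the domination on the one-dimensional subspace does reduce to $p(g)+p(-g)\le p(0)=0$, boundedness of the extension follows from $\Lambda\ge p$ applied at $\pm h$, and since $\bar\Omega$ is compact metric the weak-$^\ast$ topology on probability measures is metrizable, so your cluster point can indeed be taken along a subsequence (this is exactly the paper's Theorem~\ref{thm_convmeasures}, which you have in effect reproved). The only cosmetic quibble is the phrase ``dominates $p$ there'' --- what Hahn--Banach needs is that the linear form on $\RE\cdot g$ is bounded below by $p$ (equivalently above by $q$) on that subspace, which is precisely the inequality $p(-g)\le -p(g)$ you state --- so the logic is fine, just worded loosely. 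In short: a complete and correct proof, supplying in detail what the paper delegates to the literature.
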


We end this section with a convergence result.

\begin{theorem} \label{thm_convmeasures}
Assume that $\Omega$ is a domain in $\C^n$, and let $\{z_n\} \subset \bar \Omega$ be a sequence of points converging to $z\in \bar \Omega$. Furthermore, for each $n$, let $\mu_n \in \mathcal{J}_{z_n}^m$. Then there exists a subsequence $\{\mu_{n_j}\}$, and a measure $\mu \in \mathcal{J}_z^m$ such that $\{\mu_{n_j}\}$ converges in the weak-$^\ast$ topology
to $\mu$.
\end{theorem}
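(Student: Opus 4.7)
The plan is a straightforward weak-$^\ast$ compactness argument, very much in the spirit of the standard proofs of analogous statements for $\mathcal{PSH}$-Jensen measures (e.g.\ in~\cite{CCW, HP}). The only ingredients needed are that the constant functions belong to $\mathcal{SH}_m(\Omega)\cap \mathcal C(\bar\Omega)$ and that $\bar\Omega$ is compact.

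First I would show that each $\mu_n$ is a probability measure, so the sequence has uniformly bounded total mass. Since the constants $1$ and $-1$ lie in $\mathcal{SH}_m(\Omega)\cap \mathcal C(\bar\Omega)$, the defining inequality of $\mathcal{J}^m_{z_n}$ applied to $u\equiv 1$ gives $1 \le \mu_n(\bar\Omega)$, and applied to $u\equiv -1$ gives $-1 \le -\mu_n(\bar\Omega)$, so $\mu_n(\bar\Omega)=1$ for every $n$.

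Next, since $\bar\Omega$ is a compact metric space, the space of Borel probability measures on $\bar\Omega$ is weak-$^\ast$ sequentially compact (this is Banach--Alaoglu applied to the unit ball of $\mathcal C(\bar\Omega)^\ast$, together with separability of $\mathcal C(\bar\Omega)$; equivalently, Prokhorov's theorem, since the whole space $\bar\Omega$ is compact so tightness is automatic). Hence there exists a subsequence $\{\mu_{n_j}\}$ and a Borel probability measure $\mu$ on $\bar\Omega$ such that $\mu_{n_j}\to\mu$ in the weak-$^\ast$ topology.

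It remains to check that $\mu\in\mathcal{J}_z^m$. Fix $u\in\mathcal{SH}_m(\Omega)\cap\mathcal C(\bar\Omega)$. For each $j$ we have
\[
u(z_{n_j})\le \int_{\bar\Omega} u\, d\mu_{n_j}\, .
\]
By continuity of $u$ on $\bar\Omega$ and $z_{n_j}\to z$, the left-hand side converges to $u(z)$; by weak-$^\ast$ convergence and $u\in\mathcal C(\bar\Omega)$, the right-hand side converges to $\int u\, d\mu$. Passing to the limit yields $u(z)\le \int u\, d\mu$, and since $u$ was arbitrary this shows $\mu\in\mathcal{J}_z^m$, completing the proof. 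There is no genuine obstacle here; the only points to be careful about are that $\bar\Omega$ (and not just $\Omega$) is the underlying compact set for both the measures and the test functions, and that the test functions must be continuous on $\bar\Omega$ for the weak-$^\ast$ convergence step to be legitimate — both of which are already built into the definition of $\mathcal{J}_z^m$.
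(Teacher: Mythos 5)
Your proposal is correct and follows essentially the same route as the paper: weak-$^\ast$ sequential compactness of the probability measures on $\bar\Omega$ via Banach--Alaoglu, followed by passing to the limit in the Jensen inequality for each $u\in\mathcal{SH}_m(\Omega)\cap\mathcal C(\bar\Omega)$. Your extra observation that testing against the constants $\pm 1$ forces $\mu_n(\bar\Omega)=1$ is a useful detail the paper leaves implicit, but the argument is otherwise identical.
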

\begin{proof} The Banach-Alaoglu theorem says that the space of probability measures defined on $\bar\Omega$ is compact when equipped with the weak-$^\ast$ topology. This means that there is a subsequence $\{\mu_{n_j}\}$ that converges to a probability measure $\mu$. It remains to show that $\mu \in \mathcal{J}_z^m$. Take $u \in \mathcal{SH}_m(\Omega)\cap\mathcal C(\bar \Omega)$ then
\[
\int u \, d\mu= \lim_j \int u \, d\mu_{n_j} \geq \lim_ju(z_j)=u(z),
\]
hence $\mu \in \mathcal{J}_z^m$.
\end{proof}

\section{Basic properties of $m$-hyperconvex domains}\label{sec_basic} 

The aim of this section is to introduce $m$-hyperconvex domains (Definition~\ref{def_mhx}) within the Caffarelli-Nirenberg-Spruck model, and prove Theorem~\ref{thm_prop} . If  $m=1$, then the notion
will be the same as regular domains (see assertion (4) in Theorem~A in the introduction), and if $m=n$ then it is the same as hyperconvex domains (see
(1) in Theorem~C).

\begin{definition}\label{def_mhx}
  Let $\Omega$ be a bounded domain in $\C^n$. We say that $\Omega$ is \emph{$m$-hyperconvex} if it admits an exhaustion function that is negative and $m$-subharmonic.
\end{definition}

Traditionally, in pluripotential theory the exhaustion functions are assumed to be bounded. That assumption is obviously superfluous in Definition~\ref{def_mhx}. Even though it should be mentioned once again that up until now authors have defined $m$-hyperconvex domains to be bounded domains
that admit an exhaustion function that is negative, \emph{continuous}, and $m$-subharmonic. We prove below in Theorem~\ref{thm_prop}
that the assumption of continuity is not necessary. Before continuing with Theorem~\ref{thm_prop} let us demonstrate the concept of $m$-hyperconvexity in the
following two examples. Example~\ref{ex1} demonstrate that Hartog's triangle is $1$-hyperconvex, but not $2$-hyperconvex.

\begin{example}\label{ex1}
The Hartog's triangle $\Omega=\{(z,w) \in \C^2: |z | < | w |  <1\}$ is an example of a domain that is not hyperconvex (Proposition 1 in~\cite{DiederichFornaess}), i.e. it is not $2$-hyperconvex, but it is a regular domain, i.e. it is $1$-hyperconvex. It is easy to see that
\[
\varphi(z,w)=\max\big\{\log |w|,|z|^2-|w|^2\big\}\, .
\]
is a negative, subharmonic ($1$-subharmonic) exhaustion function for $\Omega$.
\hfill{$\Box$}
\end{example}

In Example~\ref{ex2} we construct a domain in $\mathbb{C}^3$ that is $2$-hyperconvex, but not $3$-hyperconvex.

\begin{example}\label{ex2}
For a given integer $1\leq k\leq n$, let $\varphi_k$ be the function defined on $\mathbb{C}^n$ by
\[
\varphi_k(z_1,\ldots,z_n)=|z_1|^2+\ldots+|z_{n-1}|^2+\left(1-\frac nk\right)|z_n|^2\, .
\]
Then we have that $\varphi_k$ is $m$-subharmonic function if, and only if, $m\leq k$. Let us now consider the
following domain:
\[
\Omega_k=\left\{(z_1,\dots,z_n)\in \mathbb C^n: |z_1|<1, \dots, |z_n|<1, \varphi_k(z)<1 \right\}\, .
\]
This construction yields that $\Omega_k$ is a balanced Reinhardt domain that is not pseudoconvex (see e.g. Theorem~1.11.13 in~\cite{jarnicki_pflug}). Furthermore, we have that $\Omega_k$ is $k$-hyperconvex, since
\[
u(z_1,\ldots,z_n)=\max\{|z_1|,\dots,|z_n|, \varphi_k(z)\}-1\,
\]
is a $k$-subharmonic exhaustion function. In particular, we get that for $n=3$, and $k=2$, the domain $\Omega_2\subset \mathbb C^3$ is $2$-hyperconvex but not $3$-hyperconvex.
\hfill{$\Box$}
\end{example}

The aim of this section is to prove the following theorem, especially property (3).

\begin{theorem}\label{thm_prop} Assume that $\Omega$, $\Omega_1$, and $\Omega_2$ are bounded $m$-hyperconvex domains in $\C^n$, $n\geq 2$, $1\leq m\leq n$. Then we have the following.
\begin{enumerate}\itemsep2mm

\item  If $\Omega_1 \cap \Omega_2$ is connected, then the domain $\Omega_1 \cap \Omega_2$ is $m$-hyperconvex in $\C^n$.

\item  The domain $\Omega_1 \times \Omega_2$  is $m$-hyperconvex in $\C^{2n}$.

\item The domain $\Omega$ admits a negative exhaustion function that is strictly $m$-subharmonic on $\Omega$, and continuous on $\bar \Omega$.

\item If $\Omega$ is a priori only a bounded domain in $\mathbb C^n$ such that for every $z\in \partial \Omega$ there exists a neighborhood $U_z$ such that $\Omega\cap U_z$ is $m$-hyperconvex, then $\Omega$ is $m$-hyperconvex.

\end{enumerate}
\end{theorem}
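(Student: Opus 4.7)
The proof naturally splits into four items, of which (3) is the substantial one; for (1), (2), and (4) my plan is a short direct construction.

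For (1), I would take negative $m$-subharmonic exhaustions $\varphi_j$ of $\Omega_j$, $j=1,2$, and set $\varphi:=\max(\varphi_1,\varphi_2)$ on $\Omega_1\cap\Omega_2$. This is negative and $m$-subharmonic by property~(2) of Theorem~\ref{thm_basicprop1}, and it is exhausting because every point of $\partial(\Omega_1\cap\Omega_2)$ lies in $\partial\Omega_1\cup\partial\Omega_2$, so at least one $\varphi_j(z)\to 0$. For (2), I would take $\psi(z,w):=\max(\varphi_1(z),\varphi_2(w))$ on $\Omega_1\times\Omega_2\subset\C^{2n}$; the only nontrivial check is that $\varphi_1$, regarded on $\C^{2n}$ as independent of $w$, is still $m$-subharmonic. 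After regularization via property~(6) of Theorem~\ref{thm_basicprop1} this reduces to smooth $\varphi_1$, where the point is that appending $n$ zero eigenvalues to a vector in G\aa rding's cone $\Gamma_m\subset\RE^n$ keeps it inside $\Gamma_m\subset\RE^{2n}$, since $\sigma_1,\ldots,\sigma_m$ are insensitive to the added zeros.

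For (4), compactness of $\partial\Omega$ yields a finite cover $U_1,\ldots,U_N$ with $m$-subharmonic exhaustions $\varphi_j$ of $\Omega\cap U_j$; I would shrink to $V_j\Subset U_j$ still covering $\partial\Omega$. For appropriate centres $z_j$ and large constants $A_j,R_j$, the plurisubharmonic function $\chi_j(z):=A_j(|z-z_j|^2-R_j^2)$ is negative on $\bar V_j$ and dominates $\varphi_j$ on $\Omega\cap(U_j\setminus V_j)$, so the gluing lemma (property~(7) of Theorem~\ref{thm_basicprop1}) makes
\[
\psi_j := \begin{cases} \max(\varphi_j,\chi_j), & z\in\Omega\cap U_j,\\ \chi_j, & z\in\Omega\setminus U_j,\end{cases}
\]
globally $m$-subharmonic on $\Omega$. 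A suitable convex combination of the $\psi_j$'s then produces the desired negative $m$-subharmonic exhaustion of $\Omega$.

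Part~(3) is the heart of the theorem. Given a negative $m$-subharmonic exhaustion $\varphi$, I would fix an increasing sequence of compact sets $K_j\Subset\Omega$ with smooth boundary such that each $\Omega\setminus K_j$ is regular in the sense of Theorem~A and $\bigcup_j\mathring K_j=\Omega$, and form the continuous relative extremal functions $u_j:=\textbf{S}^c_{K_j}$ from Definition~\ref{def2}. The basic candidate is
\[
\tilde\psi := \sum_{j=1}^\infty 2^{-j}u_j,
\]
which is negative, continuous on $\bar\Omega$ by uniform convergence, vanishes on $\partial\Omega$, and is exhausting because $u_j\leq -1$ on $K_j$; I would then recover strictness at each interior point by a small perturbation using $|z|^2$, tapered near $\partial\Omega$ so as not to spoil the exhaustion property. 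The principal obstacle will be showing that each $u_j$ actually lies in $\mathcal{SH}_m(\Omega)\cap\mathcal{C}(\bar\Omega)$ with $u_j=0$ on $\partial\Omega$. For this I plan to use Edwards' theorem (Theorem~\ref{eq_edwardversion2}) to represent $u_j$ as an infimum over Jensen measures and then exploit the existence of $\varphi$, together with the weak-$^\ast$ convergence from Theorem~\ref{thm_convmeasures}, to force $u_j(z)\to 0$ at every boundary point; Corollary~\ref{walsh2} would then complete the upgrade to a continuous $m$-subharmonic function on $\bar\Omega$.
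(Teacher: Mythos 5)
Parts (1) and (2) are fine and coincide with the paper's argument. The two substantive items, (3) and (4), each contain a genuine gap.

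In part (3), your route to the boundary continuity of $u_j=\textbf{S}^c_{K_j}$ is circular. Edwards' theorem (Theorem~\ref{eq_edwardversion2}) gives $u_j(z)=-\sup\{\mu(K_j):\mu\in\mathcal{J}_z^m\}$, and the weak-$^\ast$ compactness argument of Theorem~\ref{thm_convmeasures} reduces the claim $u_j(\xi)\to 0$ to showing that every $\mu\in\mathcal{J}_z^m$ with $z\in\partial\Omega$ puts no mass on $K_j$. But the only way to see that is to test $\mu$ against a function of the cone $\mathcal{SH}_m(\Omega)\cap\mathcal C(\bar\Omega)$ that vanishes at $z$ and is bounded away from $0$ on $K_j$ --- i.e.\ against a continuous exhaustion function, which is exactly what part (3) is supposed to produce; the given exhaustion $\varphi$ is merely upper semicontinuous and is not a legal test function for Jensen measures. (Indeed, the paper proves the support property of boundary Jensen measures, implication $(1)\Rightarrow(4)$ of Theorem~\ref{thm_mhx}, only \emph{after} and \emph{using} part (3).) The non-circular argument is direct: the squeeze $M\varphi\leq \textbf{S}_{E}\leq \textbf{H}_{E}$ forces $\lim_{z\to\partial\Omega}\textbf{S}_E(z)=0$, and the generalized Walsh theorem (Theorem~\ref{walsh}) then gives $\textbf{S}_E\in\mathcal{SH}_m(\Omega)\cap\mathcal C(\bar\Omega)$; no Jensen measures are needed. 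Your strictness step is also only gestured at; the standard device (and the paper's) is $\psi_j=\max\{u,(|z|^2-M)/j\}$ followed by a weighted sum $\sum a_j\psi_j$, which is automatically ``tapered'' because each $\psi_j$ vanishes on $\partial\Omega$ while equalling $(|z|^2-M)/j$ on any fixed $\omega\Subset\Omega$ for large $j$.

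Part (4) as written does not work, and the failure is exactly the difficulty the theorem is about. A negative exhaustion $\varphi_j$ of $\Omega\cap U_j$ tends to $0$ on the \emph{entire} boundary of $\Omega\cap U_j$, including the inner portion $\partial U_j\cap\Omega$ and all of $\partial\Omega\cap U_j$. Hence a continuous $\chi_j$ that dominates $\varphi_j$ on $\Omega\cap(U_j\setminus V_j)$ must satisfy $\chi_j\geq 0$ on $\partial\Omega\cap(\bar U_j\setminus V_j)$, which is incompatible with $\chi_j<0$ on $\bar V_j$ at points of $\partial\Omega\cap\partial V_j\cap U_j$; so the required $\chi_j$ does not exist. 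Moreover, even granting some glued $\psi_j\leq 0$, a convex combination $\sum c_j\psi_j$ is not an exhaustion: at $w\in\partial\Omega\cap V_1$ the terms with $w\notin U_j$ contribute a fixed negative amount, so the sum does not tend to $0$ (a maximum would be needed, and then negativity in the interior fails). The point of the paper's proof is precisely to overcome the fact that the local exhaustions cannot be truncated at a strictly negative level: one composes them with a convex increasing $\chi$ with $\chi(x)\to\infty$ as $x\to 0^-$ so that $|\chi\circ u_j-\chi\circ u_k|\leq 1$ on overlaps, glues with cutoff corrections $\varphi_j+M_2(|z|^2-M_1)$, and finally passes through the envelope $\textbf{S}_{B(z,r)}$ and Theorem~\ref{walsh}. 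Some substitute for this renormalization step is unavoidable.
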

\begin{proof}

\noindent\emph{Part (1)}  For each $j=1,2$, assume that $\psi_j \in \mathcal{SH}_m(\Omega_j)$ is a negative exhaustion function for the $m$-hyperconvex domain  $\Omega_j$, $j=1,2$. Then $\max\{\psi_1,\psi_2\} \in \mathcal{SH}_m(\Omega_1\cap \Omega_2)$ is  a negative exhaustion function for $\Omega_1 \cap \Omega_2$.
Thus, $\Omega_1 \cap \Omega_2$ is $m$-hyperconvex in $\C^n$.

\bigskip

\noindent\emph{Part (2)} This part is concluded by defining a negative exhaustion function by
        \[
           \psi(z_1,z_2)=\max\{\psi_1(z_1),\psi_2(z_2)\} \in \mathcal{SH}_m(\Omega_1\times \Omega_2)\, .
        \]

\bigskip

\noindent\emph{Part (3)} The proof of this part is inspired by~\cite{C}. First we shall prove that there exists a negative and continuous exhaustion function. We know that $\Omega$ always admits a bounded, negative, exhaustion function $\varphi \in \mathcal{SH}_m(\Omega)$. Fix $w\in \Omega$ and $r>0$ such that
$B(w,r)\Subset\Omega$, and note that there exists a constant $M>0$ such that
\[
M\varphi \leq \textbf{H}_{B(w,r)}
\]
(the definition of $\textbf{H}_{B(w,r)}$ is in the remark after Definition~\ref{def2}). This construction implies that
\[
0=\lim_{z\to \partial \Omega}M\varphi(z)\leq \lim_{z\to \partial \Omega}\textbf{S}_{\textbf{H}_{B(w,r)}}(z)\leq \lim_{z\to \partial \Omega}\textbf{H}_{B(w,r)}(z)=0\, .
\]
Thanks to the generalized Walsh theorem (Theorem~\ref{walsh}) we have that
\[
\textbf{S}_{\textbf{H}_{B(w,r)}}=\textbf{S}_{B(w,r)}\in \mathcal{SH}_m(\Omega)\cap \mathcal C(\bar\Omega)\, ,
\]
and that $\textbf{S}_{B(w,r)}$ is a continuous exhaustion function.

Next, we shall construct a continuous \emph{strictly} $m$-subharmonic exhaustion function for $\Omega$. From the first part of this theorem we know that there is a negative and continuous exhaustion function $u \in \mathcal{SH}_m(\Omega)\cap \mathcal C(\bar \Omega)$  for $\Omega$.  Choose $M>0$ such that $|z|^2-M \leq -1$ on $\Omega$, and define
\[
\psi_j(z)=\max\left \{u(z),\frac{|z|^2-M}{j}\right\}\, .
\]
Then $\psi_j \in \mathcal{SH}_m(\Omega)\cap \mathcal C(\bar \Omega)$, $\psi_j|_{\partial \Omega}=0$, and $\psi_j <0$ on $\Omega$. If we now let
\[
a_j=\frac{1}{2^j}\frac{1}{\max\{\sup(-\psi_j),1\}}, \qquad \text{and} \qquad\psi=\sum_{j=1}^{\infty}a_j\psi_j\, ,
\]
then $\psi^k=\sum_{j=1}^{k}a_j\psi_j$ defines a decreasing sequence of continuous $m$-subharmonic functions on defined $\Omega$. We can conclude that $\psi \in \mathcal{SH}_m(\Omega)$, since $\psi(z)> -\infty$  for $z \in \Omega$. The continuity of $\psi$ is obtained by the Weierstrass $M$-test. To see that $\psi$ is strictly $m$-subharmonic, note that if $\omega \Subset \Omega$, then there exists an index $j_\omega$ such that on $\omega$ we have that
\[
\psi_j=\frac{|z|^2-M}{j} \qquad \text{ for all } j>j_\omega\, .
\]
This gives us that
\[
\psi=\sum_{j=1}^{j_\omega}a_j \psi_j+\sum_{j=j_\omega+1}^{\infty}a_j \frac{|z|^2-M}{j}\, .
\]
Since $\frac{|z|^2-M}{j}$ is strictly plurisubharmonic, and therefore strictly $m$-subharmonic, we have that $\psi$ is strictly $m$-subharmonic on $\Omega$. Finally, $\psi$ is an exhaustion function for $\Omega$, since  $\psi_j|_{\partial \Omega}=0$ for all $j$.

\bigskip

\noindent\emph{Part (4)} The idea of the proof of this part is from~\cite{Blocki_LN}. By the assumption there are neighborhoods $U_{z_1},\dots,U_{z_N}$ such that $\partial \Omega\subset \bigcup_{j=1}^NU_{z_j}$, and each $U_{z_j}\cap \Omega$ is $m$-hyperconvex. Let $u_j:\Omega\to [-1,0]$ be a negative and continuous $m$-subharmonic exhaustion function for
$U_{z_j}\cap \Omega$. Let $V_j\Subset U_{z_j}$ be such that  $\partial \Omega\subset \bigcup_{j=1}^NV_{j}$. For $x<0$, we then define the following continuous functions
\[
\begin{aligned}
\beta(x)&=\max\left\{u_j(z): z\in \bar V_j\cap \Omega, j=1,\dots, N, \operatorname{dist}(z,\partial \Omega)\leq -x  \right\}, \\
\alpha(x)&=\min\left\{u_j(z): z\in \bar V_j\cap \Omega, j=1,\dots, N, \operatorname{dist}(z,\partial \Omega)\leq -x  \right\}\, .
\end{aligned}
\]
From these definitions it follows that  $\alpha\leq \beta$,  and $\lim_{x\to 0^-}\alpha(x)=0$. Therefore, there exists a convex, increasing function $\chi:(-\infty,0)\to(0,\infty)$
such that $\lim_{x\to 0^-}\chi(x)=\infty$, and $\chi\circ \beta\leq \chi\circ\alpha+1$ (see e.g. Lemma A2.4. in~\cite{Blocki_LN}). Hence,
\[
 |\chi\circ u_j-\chi\circ u_k|\leq 1 \, \text { on } \, V_j\cap V_k\cap \Omega\, .
\]
For any $\varepsilon>0$ we have that
\begin{equation}\label{in1}
|\chi(u_j(z)-\varepsilon)-\chi(u_k(z)-\varepsilon)|\leq 1 \, \text { for } \, z\in V_j\cap V_k\cap \Omega\, ,
\end{equation}
since $\chi $ is an increasing and convex function. Next, let $V'_j\Subset V_j$, $j=1,\dots,N$, be such that $\bar\Omega\setminus V\subset \bigcup_{j=1}^NV_j'$, for some open set $V\Subset \Omega$. For each $j$, take a smooth function $\varphi_j$  such that
$\operatorname{supp}(\varphi_j)\subset V_j$, $0\leq \varphi_j\leq 1$, and $\varphi_j=1$ on a neighborhood of $\bar V_j'$. Furthermore, there are constants $M_1,M_2>0$ such that $|z|^2-M_1\leq 0$ on $\Omega$, and such that the functions $\varphi_j+M_2(|z|^2-M_1)$ are
$m$-subharmonic for $j=1,\dots,N$. Let us define
\[
v_{j,\varepsilon}(z)=\chi(u_j(z)-\varepsilon)+\varphi_j(z)-1+M_2(|z|^2-M_1)\, .
\]
From~(\ref{in1}) it then follows that
\begin{equation}\label{in2}
v_{j,\varepsilon}\leq v_{k,\varepsilon} \, \text { on a neigborhood of } \, \partial V_j\cap \bar V_k'\cap \Omega\, .
\end{equation}
Take yet another constant $c$ such that
\begin{equation}\label{in3}
\sup\left \{u_j(z): z\in V\cap V_j, j=1,\dots N\right \}\, < c<0\, ,
\end{equation}
and define
\[
v_{\varepsilon}(z)=\max\left \{ v_{j,\varepsilon}(z),\chi(c)-1+M_2(|z|^2-M_1)\right\}\, .
\]
By (\ref{in2}), and (\ref{in3}), it follows that $v_{\varepsilon}$ is a well-defined $m$-subharmonic function defined on $\Omega$. Finally note that, for $c<-\varepsilon$, the following function
\[
\psi_{\varepsilon}(z)=\frac {v_{\varepsilon}(z)}{\chi(-\varepsilon)}-1
\]
is $m$-subharmonic, and $\psi_{\varepsilon}\leq 0$ on $\Omega$. For $z\in \partial \Omega$,  we have that
\[
u_j(z)=0\qquad \text{ and }\qquad  \varphi_j(z)=1\, ,
\]
hence
\begin{equation}\label{in4}
\psi_{\varepsilon}(z)\geq \frac {v_{j,\varepsilon}(z)}{\chi(-\varepsilon)}-1=\frac {\chi(-\varepsilon)+M_2(|z|^2-M_1)}{\chi(-\varepsilon)}-1\geq-\frac {M_1M_2}{\chi(-\varepsilon)}.
\end{equation}
In addition, it holds that
\begin{equation}\label{in5}
\psi_{\varepsilon}(z)\leq \frac {\chi(c)-1}{\chi(-\varepsilon)}-1, \, z\in V\setminus \bigcup_{j=1}^NV_j\, .
\end{equation}
Now fix a ball $B(z,r)\Subset  V\setminus \bigcup_{j=1}^NV_j$. From (\ref{in4}), (\ref{in5}), and the fact that
\[
\lim_{x\to 0^-}\chi(x)=\infty
\]
we have that
\[
(\sup_{\varepsilon}\psi_{\varepsilon})^*\leq \textbf{S}_{B(z,r)}
\]
(see Definition \ref{def2}). Thus,
\[
\lim_{\xi\to \partial \Omega}\textbf{S}_{B(z,r)}(\xi)=0\, .
\]
Theorem~\ref{walsh} (generalized Walsh's theorem) gives us that
\[
\textbf{S}_{B(z,r)}\in
 \mathcal {SH}_m(\Omega)\cap\mathcal C(\bar \Omega)\, ,
 \]
and that $\textbf{S}_{B(z,r)}$ is the desired exhaustion function for $\Omega$. This ends the proof of Part (4), and this theorem.
\end{proof}

\section{The geometry of $m$-regular domains}\label{sec_geometry}

In this section, we shall investigate the geometry of the corresponding notions of $B$-regular and hyperconvex domains within the Caffarelli-Nirenberg-Spruck model. More precisely, in Theorem~\ref{bm-reg} we prove what degenerates into Theorem~B when $m=n$, and in Theorem~\ref{thm_mhx} we prove what is Theorem~C in the case $m=n$.

\begin{theorem}\label{thm_mhx} Assume that $\Omega$ is a bounded domain in $\C^n$, $n\geq 2$, $1\leq m\leq n$. Then the following assertions are equivalent.

 \medskip

 \begin{enumerate}\itemsep2mm

 \item $\Omega$ is $m$-hyperconvex in the sense of Definition~\ref{def_mhx};

 \item $\partial\Omega$ has a weak barrier at every point that is $m$-subharmonic;

 \item $\Omega$ admits an exhaustion function that is negative, smooth and strictly $m$-subharmonic;

 \item  for every $z\in \partial \Omega$,  and every $\mu\in\mathcal{J}_z^m$, we have that $\operatorname{supp} (\mu) \subseteq \partial \Omega$.
 \end{enumerate}
 \end{theorem}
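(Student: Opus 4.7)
I would prove the theorem by establishing the cycle $(3) \Rightarrow (1) \Rightarrow (2) \Rightarrow (1)$ together with $(1) \Leftrightarrow (4)$; the remaining implication $(1) \Rightarrow (3)$ is, as the introduction announces, deferred to Section~\ref{sec_smooth}. The central idea is to combine the envelopes $\textbf{S}_B$ and $\textbf{S}^c_B$ of Definition~\ref{def2} for a fixed ball $B \Subset \Omega$ with the generalized Walsh theorem (Theorem~\ref{walsh} and Corollary~\ref{walsh2}) and with Edwards' duality (Theorem~\ref{eq_edwardversion2}), always upgrading a candidate $m$-subharmonic function into a continuous one at the end.

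The easy steps are brief. For $(3) \Rightarrow (1)$ one just observes that a smooth strictly $m$-subharmonic negative exhaustion is, a fortiori, a negative $m$-subharmonic exhaustion. For $(1) \Rightarrow (2)$ one takes the continuous negative $m$-subharmonic exhaustion function $\varphi$ produced by Theorem~\ref{thm_prop}(3): since $\varphi < 0$ on $\Omega$ and $\varphi \to 0$ at every boundary point, $\varphi$ serves as an $m$-subharmonic weak barrier at every point of $\partial\Omega$ simultaneously. For $(1) \Rightarrow (4)$, the same $\varphi$, extended by $0$ on $\partial\Omega$, belongs to $\mathcal{SH}_m(\Omega) \cap \mathcal C(\bar\Omega)$ with $\{\varphi = 0\} = \partial\Omega$; for $z_0 \in \partial\Omega$ and $\mu \in \mathcal{J}_{z_0}^m$, Jensen's inequality forces
\[
0 = \varphi(z_0) \leq \int_{\bar\Omega} \varphi\, d\mu \leq 0,
\]
so $\varphi$ vanishes $\mu$-almost everywhere and hence $\operatorname{supp}(\mu) \subseteq \partial\Omega$.

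For $(4) \Rightarrow (1)$, Edwards' theorem applied to the continuous function $\textbf{H}_B$ yields
\[
\textbf{S}^c_B(z) \;=\; \inf\Bigl\{\int_{\bar\Omega} \textbf{H}_B\, d\mu : \mu \in \mathcal{J}_z^m\Bigr\}.
\]
When $z \in \partial\Omega$, condition (4) confines every Jensen measure to $\partial\Omega$, where $\textbf{H}_B = 0$, so $\textbf{S}^c_B \equiv 0$ on $\partial\Omega$. Corollary~\ref{walsh2} then yields $\textbf{S}^c_B = \textbf{S}_B \in \mathcal{SH}_m(\Omega) \cap \mathcal C(\bar\Omega)$, and since $\textbf{S}_B \leq \textbf{H}_B < 0$ on $\Omega$ the envelope is a continuous negative $m$-subharmonic exhaustion function.

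The delicate implication is $(2) \Rightarrow (1)$, precisely because a weak barrier is only required to be $m$-subharmonic on $\Omega$ and hence cannot be fed directly into the continuous envelope $\textbf{S}^c_B$ nor tested against Jensen measures. For the fixed ball $B$ and a boundary point $y_0$, pick a weak $m$-subharmonic barrier $u$ at $y_0$; upper semicontinuity together with $u<0$ on $\Omega$ ensure that $M := \max_{\bar B} u$ is a finite strictly negative number. Setting $\lambda := -1/M > 0$, the function $\lambda u$ is $m$-subharmonic on $\Omega$, satisfies $\lambda u \leq -1$ on $B$ and $\lambda u \leq 0$ on $\Omega$, and tends to $0$ at $y_0$. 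Thus $\lambda u$ is an admissible competitor in the \emph{looser} sup defining $\textbf{S}_B$ (the one that does not require boundary continuity), giving
\[
0 \;\geq\; \textbf{S}_B(z) \;\geq\; \lambda u(z) \;\xrightarrow[z \to y_0]{}\; 0.
\]
Hence $\lim_{z \to y_0}\textbf{S}_B(z) = 0 = \textbf{H}_B(y_0)$ at every boundary point $y_0$, and Theorem~\ref{walsh} promotes $\textbf{S}_B = \textbf{S}_{\textbf{H}_B}$ to $\mathcal{SH}_m(\Omega) \cap \mathcal C(\bar\Omega)$. The bound $\textbf{S}_B \leq \textbf{H}_B < 0$ on $\Omega$ then produces the desired continuous negative $m$-subharmonic exhaustion, yielding~(1). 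The crux of the argument is precisely the transition from a possibly discontinuous weak barrier to a continuous $m$-subharmonic function on $\bar\Omega$, which is bypassed by first working with the unrestricted envelope $\textbf{S}_B$ and invoking Walsh's theorem only at the end.
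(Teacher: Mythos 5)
Your overall architecture coincides with the paper's: the same cycle of implications, the same use of the relative extremal envelopes of Definition~\ref{def2} together with Theorem~\ref{walsh} and Corollary~\ref{walsh2}, Edwards' duality for $(4)\Rightarrow(1)$, and the deferral of $(1)\Rightarrow(3)$ to Section~\ref{sec_smooth}. Your treatments of $(3)\Rightarrow(1)$, $(1)\Rightarrow(2)$, $(1)\Rightarrow(4)$ and $(2)\Rightarrow(1)$ match the paper's (the rescaling $\lambda u$ of the weak barrier into a competitor for $\textbf{S}_B$ is exactly the paper's ``$M_z u_z \leq \textbf{S}_{B(w,r)}$'' step, just written out).

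The one place where you skip a step is $(4)\Rightarrow(1)$. Edwards' theorem plus condition (4) gives you $\textbf{S}^c_B(w)=0$ for every $w\in\partial\Omega$, i.e.\ the \emph{value} of the envelope at boundary points; but the hypothesis of Corollary~\ref{walsh2} is the statement $\lim_{\xi\to w,\,\xi\in\Omega}\textbf{S}^c_B(\xi)=0$, a \emph{limit from inside}, and you pass from one to the other without comment. This is precisely the point the paper labors over: it runs a contradiction argument with a sequence $z_n\to z$, extracts a weak-$^\ast$ convergent subsequence of Jensen measures via Theorem~\ref{thm_convmeasures}, and uses Lemma~2.3 of~\cite{CCW} to get $\mu(\overline{B(w,r)})>\varepsilon$ for the limit measure, contradicting (4). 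Your route can be repaired more cheaply: $\textbf{S}^c_B$ is by definition a pointwise supremum of functions in $\mathcal C(\bar\Omega)$, hence lower semicontinuous on $\bar\Omega$, so $\varliminf_{\xi\to w}\textbf{S}^c_B(\xi)\geq \textbf{S}^c_B(w)=0$, and since $\textbf{S}^c_B\leq 0$ the limit is $0$. Either fix is one step, but as written the argument does not meet the hypothesis of Corollary~\ref{walsh2}, so you should add it.
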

\begin{proof} The implications $(1)\Rightarrow(2)$, and $(3)\Rightarrow(1)$ are trivial. The implication $(1)\Rightarrow(3)$ is postponed to Theorem~\ref{smoothexh} in Section~\ref{sec_smooth}.

\medskip

$(2)\Rightarrow(1):$ Let $w\in \Omega$ and $r>0$ be such that the ball $B(w,r)\Subset \Omega$. Then by assumption we have that for every $z\in \partial \Omega$
there exists a weak barrier $u_z$ at $z$ that is $m$-subharmonic. Since there exists a constant $M_z>0$ such that
\[
M_zu_z\leq \textbf{S}_{B(w,r)}
\]
it follows that
\[
\lim_{\xi\to \partial \Omega}\textbf{S}_{B(w,r)}(\xi)=0\, .
\]
Thanks to the generalized Walsh theorem (Theorem~\ref{walsh}) we know that $\textbf{S}_{B(w,r)}\in \mathcal {SH}_m(\Omega)\cap \mathcal C(\bar \Omega)$. Hence, $\textbf{S}_{B(w,r)}$ is an exhaustion function for $\Omega$.

\medskip

 $(1)\Rightarrow(4):$ Assume that $\Omega$ is $m$-hyperconvex, and that $u \in \mathcal{SH}_m(\Omega)\cap \mathcal C(\bar \Omega)$ is an exhaustion function for $\Omega$. If $z \in \partial \Omega$, and $\mu \in \mathcal{J}_z^m$, then
  \[
  0=u(z)\leq \int u \, d\mu \leq 0\, .
  \]
  This implies that $\operatorname{supp} (\mu) \subseteq \partial \Omega$, since $u <0$ on $\Omega$.

 \medskip

$(4)\Rightarrow(1):$ Suppose that $\operatorname{supp}(\mu) \subset \partial \Omega$ for all $\mu \in \mathcal{J}_z^m$,  $z \in \partial \Omega$. Let $w\in \Omega$, $r>0$, be such that the ball $B(w,r)\Subset \Omega$, and let
\[
\textbf{S}^c_{B(w,r)}(z)= \sup\{\varphi(z): \varphi \in \mathcal{SH}_m(\Omega)\cap \mathcal C(\bar \Omega), \varphi \leq 0, \varphi \leq -1 \ \text{on} \ B(w,r)\}\, .
\]
From Edwards' theorem (Theorem \ref{eq_edwardversion2}) it follows that
\[
\textbf{S}^c_{B(w,r)}(z)=\inf\left\{\int -\chi_{B(w,r)} \, d\mu : \mu \in \mathcal{J}_z^m\right\}=-\sup\left\{\mu(B(w,r)): \mu \in \mathcal{J}_z^m\right\}.
\]
We shall now prove that
\[
\lim_{\xi\to \partial \Omega}\textbf{S}^c_{B(w,r)}(\xi)=0\, ,
\]
and this shall be done with a proof by contradiction. Assume the contrary, i.e. that there is a point $z \in \partial \Omega$ such that
\[
\varliminf_{\xi \rightarrow z}\textbf{S}^c_{B(w,r)}(\xi)<0\, .
\]
Then we can find a sequence $\{z_n\}$, that converges to $z$, and
\[
\textbf{S}^c_{B(w,r)}(z_n)<-\varepsilon\qquad \text{ for every } n\, .
\]
We can find corresponding measures $\mu_n \in \mathcal{J}_{z_n}^m$ such that $\mu_n(B(w,r))>\varepsilon$. By passing to a subsequence, Theorem~\ref{thm_convmeasures} gives us that we can assume that $\mu_n$ converges in the weak-$^\ast$ topology to a measure $\mu \in \mathcal{J}_z^m$. Lemma 2.3 in \cite{CCW}, implies then that
\[
\mu(\overline {B(w,r)})=\int \chi_{\overline {B(w,r)}} \, d\mu \geq \varlimsup_{n \rightarrow \infty} \int \chi_{\overline {B(w,r)}} \, d\mu_n =\varlimsup_{n \rightarrow \infty} \mu_n(\overline {B(w,r)}) >\varepsilon\geq 0\, .
\]
This contradicts the assumption that $\mu \in \mathcal{J}_z^m$ only has support on the boundary. Hence, Corollary~\ref{walsh2} gives us that
\[
\textbf{S}^c_{B(w,r)} \in \mathcal {SH}_m(\Omega)\cap \mathcal C(\bar \Omega)\, ,
\]
and that $\textbf{S}^c_{B(w,r)}$ is an exhaustion function for $\Omega$. Thus, $\Omega$ is $m$-hyperconvex.

\end{proof}

Before we can start with the proof of Theorem~\ref{bm-reg} we need the following corollary.

\begin{corollary}\label{cor3}
Let $\Omega$ be a bounded $m$-hyperconvex domain in $\C^n$, and let $f \in \mathcal C(\partial \Omega)$. Then there exists a function $u \in \mathcal{SH}_m(\Omega) \cap \mathcal  C(\bar \Omega)$ such that $u=f$ on $\partial \Omega$ if, and only if,
\[
f(z)=\inf\left\{\int f \, d\mu: \mu \in \mathcal{J}_z^m\right\}\qquad \text{ for all } z \in \partial \Omega\, .
\]
\end{corollary}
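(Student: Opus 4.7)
The plan is to derive this corollary from Edwards' duality theorem (Theorem~\ref{eq_edwardversion2}) combined with Corollary~\ref{walsh2} and the characterization of $m$-hyperconvexity in Theorem~\ref{thm_mhx}(4).

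For the necessity direction, suppose $u\in \mathcal{SH}_m(\Omega)\cap\mathcal C(\bar\Omega)$ satisfies $u|_{\partial \Omega}=f$. Fix $z\in\partial\Omega$ and $\mu\in\mathcal J_z^m$. Since $\Omega$ is $m$-hyperconvex, Theorem~\ref{thm_mhx}(4) guarantees that $\operatorname{supp}(\mu)\subseteq\partial\Omega$, so $\int u\,d\mu=\int f\,d\mu$, and the Jensen inequality yields $f(z)=u(z)\leq\int f\,d\mu$. Taking the infimum over $\mu$ gives $f(z)\leq\inf\{\int f\,d\mu:\mu\in\mathcal J_z^m\}$, while the reverse inequality follows from $\delta_z\in\mathcal J_z^m$.

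For sufficiency, I would first use Tietze's theorem to extend $f$ to a function $\tilde f\in\mathcal C(\bar\Omega)$ and look at $\textbf{S}^c_{\tilde f}$ in the sense of Definition~\ref{def}(a). Edwards' theorem gives the dual representation $\textbf{S}^c_{\tilde f}(z)=\inf\{\int\tilde f\,d\mu:\mu\in\mathcal J_z^m\}$. For $z\in\partial\Omega$, Theorem~\ref{thm_mhx}(4) ensures every $\mu\in\mathcal J_z^m$ is carried by $\partial\Omega$, hence $\int\tilde f\,d\mu=\int f\,d\mu$, and the hypothesis delivers $\textbf{S}^c_{\tilde f}(z)=f(z)$. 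The remaining task is to verify that $\lim_{z\to w,\,z\in\Omega}\textbf{S}^c_{\tilde f}(z)=f(w)$ for every $w\in\partial\Omega$, after which Corollary~\ref{walsh2} upgrades $\textbf{S}^c_{\tilde f}$ to a function in $\mathcal{SH}_m(\Omega)\cap\mathcal C(\bar\Omega)$ that agrees with $f$ on the boundary; that function is the desired $u$.

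The boundary limit splits into two one-sided bounds. The upper bound $\limsup_{z\to w}\textbf{S}^c_{\tilde f}(z)\leq f(w)$ is immediate from $\textbf{S}^c_{\tilde f}\leq\tilde f$ and continuity of $\tilde f$. For the lower bound I would fix $z_n\to w$ attaining the $\liminf$, choose near-minimizers $\mu_n\in\mathcal J_{z_n}^m$ with $\int\tilde f\,d\mu_n\leq\textbf{S}^c_{\tilde f}(z_n)+1/n$, and invoke Theorem~\ref{thm_convmeasures} to extract a weak-$^\ast$ subsequential limit $\mu\in\mathcal J_w^m$. Continuity of $\tilde f$ then gives $\int\tilde f\,d\mu=\lim\int\tilde f\,d\mu_{n_j}\leq\liminf\textbf{S}^c_{\tilde f}(z_n)$, while $m$-hyperconvexity forces $\operatorname{supp}(\mu)\subseteq\partial\Omega$, so by the hypothesis $f(w)\leq\int f\,d\mu=\int\tilde f\,d\mu$. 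Combining these two estimates closes the limit at $f(w)$. I expect the main obstacle to be threading the weak-$^\ast$ passage to the limit together with the support condition on $\partial\Omega$; once that is in place, the rest is a direct unfolding of Edwards' duality.
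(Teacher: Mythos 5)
Your proposal is correct and follows essentially the same route as the paper: necessity via the support condition from Theorem~\ref{thm_mhx} plus $\delta_z\in\mathcal J_z^m$, and sufficiency by extending $f$ continuously, applying Edwards' duality to identify $\textbf{S}^c_{\tilde f}$ with the infimum, establishing the boundary limit via Theorem~\ref{thm_convmeasures} (the paper phrases this as a proof by contradiction, you as a direct liminf argument with near-minimizers — the same computation), and concluding with Corollary~\ref{walsh2}. The only cosmetic difference is your use of Tietze's extension where the paper takes $\operatorname{PWB}_f$; either works.
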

\begin{proof}
Assume that $f \in \mathcal C(\partial \Omega)$, and that $u \in \mathcal{SH}_m(\Omega)\cap \mathcal C(\bar \Omega)$ is such that $u=f$ on $\partial \Omega$. Let $z \in \partial \Omega$, and $\mu \in \mathcal{J}_z^m$, then we have that
\[
f(z)=u(z) \leq \int u \, d\mu\, ,
\]
which, together with Theorem~\ref{thm_mhx}, imply that
\[
f(z) \leq \inf\left\{\int u \, d\mu: \mu \in \mathcal{J}_z^m\right\}=\inf\left\{\int f \, d\mu: \mu \in \mathcal{J}_z^m\right\}\, .
\]
Since $\delta_z \in \mathcal{J}_z^m$ we have that
\[
\inf\left\{\int f \, d\mu: \mu \in \mathcal{J}_z^m\right\} \leq \int f \, d \delta_z=f(z)\, .
\]
Hence,
\[
f(z)=\inf\left\{\int u \, d\mu: \mu \in \mathcal{J}_z^m\right\} \qquad \text{ for } z \in \partial \Omega\, .
\]
Conversely, extend $f$ to a continuous function on $\bar \Omega$ (for instance one can take $\operatorname{PWB}_f$, which was defined in Theorem~A in the introduction) and for simplicity denote it also by $f$. Since $\Omega$ is a $m$-hyperconvex domain then by Theorem~\ref{thm_mhx}
for any $z\in \partial \Omega$ and any $\mu\in \mathcal{J}_z^m$ holds $\operatorname{supp} (\mu) \subseteq \partial \Omega$, so we have
\[
f(z)=\inf\left\{\int f \, d\mu : \mu \in \mathcal{J}_z^m\right\}\qquad \text{ for all } z \in \partial \Omega\, .
\]
Edwards' theorem (Theorem~\ref{eq_edwardversion2}) gives us now that
\[
\textbf{S}^c_f(z)=\inf\left\{\int f \, d\mu: \mu \in \mathcal{J}_z^m\right\}\, ,
\]
and therefore $\textbf{S}^c_f=f$ on $\partial \Omega$. To conclude this proof we shall prove that for $z\in \partial \Omega$ it holds that
\[
\lim_{\xi\to z}\textbf{S}^c_f(\xi)=f(z)\, .
\]
We shall argue by contradiction. Assume that
\[
\varliminf_{\xi \rightarrow z}\textbf{S}^c_f(\xi) < f(z)\qquad \text{ for some } z \in \partial \Omega\, .
\]

Then we can find an $\varepsilon >0$, and a sequence $\xi_j \rightarrow z$ such that
\[
\textbf{S}^c_f(\xi_j)< f(z)-\varepsilon\qquad\text{ for every } j\, .
\]
Since, for every $j$, we have that
\[
\textbf{S}^c_f(\xi_j)=\inf\left\{\int f \, d\mu: \mu \in \mathcal{J}_{\xi_j}^m\right\}
\]
there are measures $\mu_j \in \mathcal{J}_{\xi_j}^m$ such that
\[
\int f \, d\mu_j < f(z) - \varepsilon\, .
\]
By passing to a subsequence, and using Theorem \ref{thm_convmeasures}, we can assume that $\mu_j$ converges in the weak-$^\ast$ topology to some $\mu \in \mathcal{J}_z^m$. Hence,
\[
\int f \, d\mu=\lim_j \int f \, d\mu_j < f(z)-\varepsilon\, .
\]
This contradicts the assumption that
\[
f(z)=\inf\left\{\int f \, d\mu : \mu \in \mathcal{J}_z^m\right\}\, .
\]
Therefore, by Corollary~\ref{walsh2}, $\textbf{S}^c_f \in \mathcal{SH}_m(\Omega) \cap\mathcal C(\bar \Omega)$, and the proof is finished.
\end{proof}
\begin{remark}
If $\Omega$ is a bounded domain that is not necessarily $m$-hyperconvex, then we have a similar result as in Corollary \ref{cor3} namely that there exists a function $u \in \mathcal{SH}_m(\Omega) \cap \mathcal C(\bar \Omega)$ such that $u=f$ on $\bar \Omega$ if, and only if, there exists a continuous extension $\varphi$ of $f$ to $\bar \Omega$ such that
\[
\varphi(z)=\inf\left\{\int \varphi \, d\mu: \mu \in \mathcal{J}_z^m\right\}\, .
\]
\end{remark}

We end this section by proving Theorem~\ref{bm-reg}, and it's immediate consequence. We have in Theorem~\ref{bm-reg} decided to deviate from the
notation from Definition~\ref{def}. This to simplify the comparison with Theorem~B in the introduction.

\begin{theorem} \label{bm-reg}
Assume that $\Omega$ is a bounded domain in $\C^n$, $n\geq 2$, $1\leq m\leq n$. Then the following assertions are equivalent.

\medskip

 \begin{enumerate}\itemsep2mm

\item  $\partial\Omega$ is $B_m$-regular at every boundary point $z_0\in\partial\Omega$, in the sense that
 \[
\lim_{z\to z_0\atop z\in\Omega} \operatorname{PB}^m_f(z) =f(z_0)\, ,
 \]
for each continuous function $f:\partial\Omega\to \RE$. Here
\[
\operatorname{PB}^m_f(z)=\sup\Bigg\{v(z): v\in\mathcal{SH}_m(\Omega),\; \varlimsup_{\zeta\rightarrow\xi \atop \zeta\in\Omega}v(\zeta)\leq f(\xi)\, , \;\; \forall \xi\in\partial\Omega\Bigg\}\, .
\]

\item $\partial\Omega$ has a strong barrier at every point that is $m$-subharmonic;

\item $\Omega$ admits an exhaustion function $\varphi$ that is negative, smooth, $m$-subharmonic, and such that
    \[
   \left(\varphi(z)-|z|^2\right)\in \mathcal{SH}_m(\Omega)\, ;
    \]

 \item  $\partial\Omega=\partial_{\mathcal{J}_z^m}$ in the sense of Definition~\ref{def_JzmO}.

  \end{enumerate}
 \end{theorem}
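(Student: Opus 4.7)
The plan is to close the loop $(3)\Rightarrow(2)\Rightarrow(1)\Rightarrow(4)\Rightarrow(3)$. For $(3)\Rightarrow(2)$, fix $z_0\in\partial\Omega$ and set $u(z):=\varphi(z)-|z-z_0|^2$. The identity
\[
u(z)=\bigl(\varphi(z)-|z|^2\bigr)+2\operatorname{Re}(z\cdot\bar{z}_0)-|z_0|^2
\]
exhibits $u$ as the sum of an $m$-subharmonic summand (by hypothesis $(3)$) and a pluriharmonic one, so $u\in\mathcal{SH}_m(\Omega)$. The exhaustion property of $\varphi$ together with the strict quadratic penalty yields $\lim_{z\to z_0}u(z)=0$ and $\limsup_{z\to y}u(z)<0$ at every other $y\in\bar{\Omega}$, so $u$ is a strong barrier at $z_0$.

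For $(2)\Rightarrow(1)$ I would run the standard two-sided Perron--Bremermann barrier argument. Given $f\in\mathcal{C}(\partial\Omega)$, $z_0\in\partial\Omega$, and $\varepsilon>0$, choose $\delta>0$ with $|f(\xi)-f(z_0)|<\varepsilon$ on $\partial\Omega\cap B(z_0,\delta)$ and pick a strong barrier $u$ at $z_0$. For $A>0$ so large that $Au\leq-2\|f\|_\infty$ outside $B(z_0,\delta)$, the $m$-subharmonic function $f(z_0)-\varepsilon+Au$ lies in the defining family of $\operatorname{PB}^m_f$, giving $\liminf_{z\to z_0}\operatorname{PB}^m_f(z)\geq f(z_0)-\varepsilon$; dually, $f(z_0)+\varepsilon-Au$ is a continuous $m$-superharmonic majorant whose boundary values dominate $f$, and the comparison principle applied to $v-\bigl(f(z_0)+\varepsilon-Au\bigr)\in\mathcal{SH}_m(\Omega)$ forces every candidate $v$ below this majorant, so $\limsup_{z\to z_0}\operatorname{PB}^m_f(z)\leq f(z_0)+\varepsilon$. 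Sending $\varepsilon\downarrow 0$ closes this step.

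For $(1)\Rightarrow(4)$ I would test with the specific datum $f(z)=-|z-z_0|\in\mathcal{C}(\partial\Omega)$. Hypothesis $(1)$ combined with Corollary~\ref{walsh2} places $F:=\operatorname{PB}^m_f$ in $\mathcal{SH}_m(\Omega)\cap\mathcal{C}(\bar{\Omega})$ with $F|_{\partial\Omega}=f$; since $F\leq 0$ on $\partial\Omega$ and $F\in\mathcal{SH}_m\subset\mathcal{SH}$, the strong maximum principle yields $F<0$ on $\bar{\Omega}\setminus\{z_0\}$ together with $F(z_0)=0$. For any $\mu\in\mathcal{J}^m_{z_0}$ the Jensen inequality gives
\[
0=F(z_0)\leq\int F\,d\mu\leq 0,
\]
so $F=0$ $\mu$-almost everywhere; as $\{F=0\}=\{z_0\}$, this forces $\mu=\delta_{z_0}$.

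The main obstacle is $(4)\Rightarrow(3)$. I would first produce the continuous prototype
\[
\psi(z):=\textbf{S}^c_{-|z|^2}(z)+|z|^2,
\]
which is $m$-subharmonic as a sum of two such terms, satisfies $\psi\leq 0$ because $\textbf{S}^c_{-|z|^2}\leq-|z|^2$, and fulfils $\psi-|z|^2=\textbf{S}^c_{-|z|^2}\in\mathcal{SH}_m(\Omega)$. Edwards' Theorem (Theorem~\ref{eq_edwardversion2}) combined with $(4)$ forces $\textbf{S}^c_{-|z|^2}(z_0)=-|z_0|^2$ at every $z_0\in\partial\Omega$; boundary continuity then follows from Theorem~\ref{thm_convmeasures} and Corollary~\ref{walsh2}, so $\psi=0$ on $\partial\Omega$. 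The strong maximum principle applied to $\psi$ (were it to vanish at an interior point it would be identically zero, forcing $\textbf{S}^c_{-|z|^2}=-|z|^2$, which is impossible as $-|z|^2\notin\mathcal{SH}_m$) gives $\psi<0$ throughout $\Omega$, so $\psi$ is a continuous negative exhaustion of the required type. To pass to smoothness I would mollify by the standard convolution of Theorem~\ref{thm_basicprop1}(6); since $|z|^2\star\rho_\varepsilon=|z|^2+c_\varepsilon$ for a radial $\rho_\varepsilon$, the smooth approximants still satisfy $(\,\cdot\,)-|z|^2\in\mathcal{SH}_m$, and the interior smooth pieces can then be glued down to the boundary by regularized-max constructions in the spirit of Theorem~\ref{thm_basicprop1}(7). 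The hardest step will be synchronising the mollification radii with compatible level-shifts so that the patched $\varphi$ is simultaneously smooth on $\Omega$, strictly negative there, and tends to zero on $\partial\Omega$; this demands the same careful bookkeeping used for the smoothing to be carried out in Section~\ref{sec_smooth}.
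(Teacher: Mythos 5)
Most of your cycle is sound, and in places more direct than the paper's: your $(3)\Rightarrow(2)$ via $u=\varphi-|z-z_0|^2=(\varphi-|z|^2)+2\operatorname{Re}(z\cdot\bar z_0)-|z_0|^2$ is a clean barrier construction (the paper instead routes through $(1)$), your two-sided barrier argument for $(2)\Rightarrow(1)$ is the classical one and works, and your $(1)\Rightarrow(4)$ with the peak function $-|z-z_0|$ matches the paper's argument. Your $(4)\Rightarrow(3)$ also starts correctly: Edwards' theorem plus $\mathcal J^m_{z_0}=\{\delta_{z_0}\}$ gives $\textbf{S}^c_{-|z|^2}(z_0)=-|z_0|^2$, the compactness argument of Theorem~\ref{thm_convmeasures} upgrades this to boundary limits, Corollary~\ref{walsh2} then puts $\textbf{S}^c_{-|z|^2}$ in $\mathcal{SH}_m(\Omega)\cap\mathcal C(\bar\Omega)$ (note that only at this point is $\psi$ legitimately $m$-subharmonic --- the sup of a family need not be upper semicontinuous a priori, so state the continuity first), and the maximum principle gives $\psi<0$ in $\Omega$. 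This produces a \emph{continuous} exhaustion with $\psi-|z|^2\in\mathcal{SH}_m(\Omega)$.

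The genuine gap is the passage from this continuous $\psi$ to a globally \emph{smooth} one. Convolution only produces smooth functions on the shrunken domains $\{z:\operatorname{dist}(z,\partial\Omega)>\varepsilon\}$, and what you must then do is patch together infinitely many such local smoothings, with level shifts tending to zero at $\partial\Omega$, so that the result is smooth on all of $\Omega$, still satisfies $(\,\cdot\,)-|z|^2\in\mathcal{SH}_m(\Omega)$, remains negative, and still tends to $0$ at the boundary. You explicitly defer this ("the same careful bookkeeping used for the smoothing in Section~\ref{sec_smooth}"), but that bookkeeping \emph{is} the content of the step: gluing by an ordinary $\max$ destroys smoothness on the gluing locus, and a regularized max only helps if one can arrange, uniformly down to the boundary, that each new patch strictly dominates the previous one off a compact set --- this is exactly where the uniform strictness coming from the $|z|^2$-term must be spent, and it is not automatic. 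The paper closes this step by invoking Richberg's approximation theorem (Theorem~5.21 in Demailly's book, valid for $m$-subharmonic functions): applied to the continuous function $u=\operatorname{PB}^m_{-2|z|^2}+|z|^2$, it directly yields a smooth $m$-subharmonic $v$ with $u-v\to 0$ at $\partial\Omega$, and $\varphi=v+|z|^2$ then has all the required properties. To complete your argument you should either quote Richberg's theorem (your $\psi$ is uniformly strictly $m$-subharmonic, so it applies verbatim to $\psi-|z|^2+|z|^2$) or actually carry out the inductive patching; as written, the hardest implication is asserted rather than proved.
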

\begin{proof}
$(1)\Rightarrow(2):$ Fix $z\in \partial \Omega$, and let $f$ be a continuous function on $\partial \Omega$ such that $f(z)=0$ and $f(\xi)<0$ for $\xi \neq z$.
Then $\operatorname{PB}^m_f$ is a strong barrier at $z$.

\medskip

$(2)\Rightarrow(1):$ Let $f\in \mathcal C(\partial \Omega)$. Then the upper semicontinuous regularization $(\operatorname{PB}^m_f)^*$ is $m$-subharmonic, and by
the generalized Walsh theorem (Theorem~\ref{walsh}) it is sufficient to show that
\[
\lim_{\xi\to \partial \Omega}\operatorname{PB}^m_f=f
\]
to obtain that $\operatorname{PB}^m_f\in \mathcal {SH}_m(\Omega)\cap\mathcal C(\bar\Omega)$. Fix $w\in \partial \Omega$, and $\varepsilon >0$. Let $u_w\in \mathcal {SH}_m(\Omega)$ be a strong barrier at $w$ that is $m$-subharmonic. Then there exists a constant $M>0$ such that
\[
f(w)+Mu_w^*-\varepsilon \leq f\, , \qquad \text{ on }\partial \Omega\, ,
\]
and therefore we have that $f(w)+Mu_w-\varepsilon\leq \operatorname{PB}^m_f$. This gives us that
\[
\varliminf_{\xi \to w}\operatorname{PB}^m_f(\xi)\geq f(w)-\varepsilon\, ,
\]
and finally $\lim_{\xi\to w}\operatorname{PB}^m_f(\xi)=f(w)$.

\medskip

$(1)\Rightarrow(4):$ Fix $z\in \partial \Omega$. Let $f$ be a continuous function on $\partial \Omega$ such that $f(z)=0$ and $f(\xi)<0$ for $\xi \neq z$. Then $\operatorname{PB}^m_f\in \mathcal {SH}_m(\Omega)\cap \mathcal C(\bar\Omega)$, and $\operatorname{PB}^m_f=f$ on $\partial \Omega$. Let $\mu\in \mathcal {J}_z^m$ then, since $\mu$ is a probability measure on $\bar \Omega$, we have that
\[
\operatorname{PB}^m_f(z)\leq \int\operatorname{PB}^m_f\, d\mu\leq \left (\max_{\operatorname{supp}(\mu)}\operatorname{PB}^m_f\right)\int\,d\mu=\operatorname{PB}^m_f(z)\, .
\]
Thus, $\mu=\delta_z$.

\medskip

$(4)\Rightarrow(1):$ This follows from Corollary~\ref{cor3}.

\medskip

$(1)\Rightarrow(3):$ Take $f(z)=-2|z|^2$ on $\partial \Omega$ and set $u(z)=\operatorname{PB}^m_f(z)+|z|^2$.
By Richberg's approximation theorem we can find a smooth function $v$ that is $m$-subharmonic and
\[
\lim_{\xi\to \partial \Omega}\left(u(\xi)- v(\xi)\right)=0\, .
\]
This implication is then concluded by letting $\varphi(z)=v(z)+|z|^2$. Some comments on Richberg's approximation theorem are in order.
In our case, Demailly's proof of Theorem~5.21 in~\cite{demailly_bok} is valid. Richberg's approximation theorem is valid in a much more abstract setting
(see e.g.~\cite{HarveyLawson4,plis}).

\medskip

$(3)\Rightarrow(1):$ Let $f\in \mathcal C(\partial \Omega)$, and let $\varepsilon >0$. Then there exists a smooth function  $g$ defined on a neighborhood of $\bar\Omega$ such that
\[
f\leq g\leq f+\varepsilon\, ,  \qquad \text{ on } \partial \Omega\, .
\]
By assumption there exists a constant $M>0$ such that $g+M\varphi\in \mathcal {SH}_m(\Omega)$. Then we have that
\[
g+M\varphi-\varepsilon \leq f\, ,  \qquad \text{ on } \partial \Omega\, .
\]
Hence, $g+M\varphi-\varepsilon\leq \operatorname{PB}^m_f$ in $\Omega$. This means that
\[
\varliminf_{\xi\to w}\operatorname{PB}^m_f(\xi)\geq g(w)-\varepsilon\geq f(w)-\varepsilon\qquad \text{ for all } w\in \partial \Omega\, ,
\]
and therefore we get
\[
\lim_{\xi\to w}\operatorname{PB}^m_f(\xi)=f(w)\, .
\]
Thus, $\operatorname{PB}^m_f\in \mathcal {SH}_m(\Omega)\cap\mathcal C(\bar\Omega)$, by the generalized Walsh theorem (Theorem~\ref{walsh}).

\end{proof}
\begin{remark}
In connection with Theorem~\ref{thm_mhx} and Theorem~\ref{bm-reg} we should mention~\cite{Gogus}, and~\cite{GogusSahutoglu}.
\end{remark}

An immediate consequence of Theorem~\ref{bm-reg} is the following corollary.

\begin{corollary}
Let $\Omega$ be a bounded domain in $\mathbb C^n$ such that for every $z\in \partial \Omega$ there exists a neighborhood $U_z$ such that $\Omega\cap U_z$ is $B_m$-regular, then $\Omega$ is $B_m$-regular.
\end{corollary}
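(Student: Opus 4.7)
The plan is to use the strong-barrier characterization in Theorem~\ref{bm-reg}, namely the equivalence $(1)\Leftrightarrow(2)$: it suffices to construct, at every boundary point $z_0\in\partial\Omega$, a strong $m$-subharmonic barrier on all of $\Omega$. Localness gives a strong barrier on $\Omega\cap U_{z_0}$; the core task is to extend it globally. The extension is done by a standard cut-off and gluing technique (of the type used in Theorem~\ref{thm_prop}(4)): we trim the local barrier at a negative level $-\varepsilon$ and paste it against the constant $-\varepsilon$ outside a smaller neighborhood of $z_0$, so that on the overlap region the two definitions already coincide and the resulting function is automatically $m$-subharmonic.

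Concretely, fix $z_0\in\partial\Omega$ and a neighborhood $U_0=U_{z_0}$ so that $\Omega_0:=\Omega\cap U_0$ is $B_m$-regular. By Theorem~\ref{bm-reg}, $(1)\Rightarrow(2)$, there exists a strong $m$-subharmonic barrier $u_0$ on $\Omega_0$ at $z_0$. Choose a neighborhood $V\Subset U_0$ of $z_0$. The set $\overline{\Omega}\cap\partial V$ is compact, contained in $\overline{\Omega_0}\setminus\{z_0\}$, and at every point $y$ of it the strong-barrier property gives $\limsup_{x\to y,\,x\in\Omega_0}u_0(x)<0$. Covering $\overline{\Omega}\cap\partial V$ by balls on which $u_0<-\varepsilon_y$ and extracting a finite subcover produces a single $\varepsilon>0$ and an open neighborhood $W$ of $\overline{\Omega}\cap\partial V$ with $u_0<-\varepsilon$ on $W\cap\Omega_0$. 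Now define
\[
v(x)=\begin{cases}\max\{u_0(x),-\varepsilon\} & x\in\Omega\cap V,\\ -\varepsilon & x\in\Omega\setminus V.\end{cases}
\]
In the collar $W\cap V\cap\Omega$ the first branch equals $-\varepsilon$, so $v$ is continuous across $\partial V\cap\Omega$ and $m$-subharmonic on $\Omega$: locally it is either $-\varepsilon$, or $u_0$ capped with $-\varepsilon$ (property (2) of Theorem~\ref{thm_basicprop1}). One then verifies $v<0$ on $\Omega$, $\lim_{x\to z_0}v(x)=0$ (since then $x\in V$ eventually and $u_0(x)\to 0>-\varepsilon$), and $\limsup_{x\to y}v(x)<0$ for every $y\in\overline{\Omega}\setminus\{z_0\}$ by splitting cases on whether $y\in V$, $y\in\partial V$, or $y\notin\overline{V}$ and using the corresponding strong-barrier property of $u_0$ in $V$. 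Thus $v$ is the desired global strong $m$-subharmonic barrier at $z_0$, and Theorem~\ref{bm-reg}, $(2)\Rightarrow(1)$, concludes the proof.

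The only delicate point is the compactness argument that yields the uniform level $-\varepsilon$ together with an \emph{open} neighborhood $W$ of $\overline{\Omega}\cap\partial V$ on which $u_0<-\varepsilon$; this open neighborhood (rather than just a bound on $\partial V$ itself) is what makes the gluing well-defined and $m$-subharmonic across $\partial V\cap\Omega$. Everything else is bookkeeping: the three limsup cases are immediate, and $m$-subharmonicity is local and reduces to the maximum and constant properties already recorded in Theorem~\ref{thm_basicprop1}.
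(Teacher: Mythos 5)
Your proof is correct and takes essentially the same route as the paper: produce the local strong barrier via Theorem~\ref{bm-reg}, cap it at a negative level $-\varepsilon$ obtained from the strong-barrier property (by compactness), glue it with the constant $-\varepsilon$ to obtain a global strong $m$-subharmonic barrier at $z_0$, and conclude by $(2)\Rightarrow(1)$ of Theorem~\ref{bm-reg}. Your only difference is one of care rather than substance: the intermediate neighborhood $V\Subset U_{z_0}$ and the explicit open collar $W$ make rigorous what the paper compresses into the choice of $\delta$ with $u_z<-\delta$ on $\partial U_z\cap\Omega$.
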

\begin{proof}
Let $z\in \partial \Omega$, $U_z$ be a neighborhood of $z$, and let $u_z$ be a strong barrier at $z$, that is $m$-subharmonic, and defined in some neighborhood of $\bar U_z\cap \Omega$. Now let $\delta>0$, be such that $u_z<-\delta$ on $\partial U_z\cap \Omega$. Then we can define a (global) strong barrier at $z$, that is $m$-subharmonic:
\[
v_z(w)=\begin{cases}
\max\{u_z(w),-\delta\} & \text {if } w\in U_z\cap \Omega,\\
-\delta & \text {if }  w\in \Omega\setminus U_z\, .
\end{cases}
\]
\end{proof}

\section{The existence of smooth exhaustion functions}\label{sec_smooth}

 The purpose of this section is to prove the implication $(1)\Rightarrow(3)$ in Theorem~\ref{thm_mhx}. That we shall do in Theorem~\ref{smoothexh}. This section
 is based on the work of Cegrell~\cite{C}, and therefore shall need a few additional preliminaries.
\begin{definition}
Assume that $\Omega$ is a bounded domain in $\mathbb C^n$, and let $u\in \mathcal {SH}_m(\Omega)\cap L^{\infty}(\Omega)$. Then the \emph{$m$-Hessian measure} of $u$ is defined by
\[
\operatorname{H}_m(u)=(dd^cu)^m\wedge \beta^{n-m}\, .
\]
where $\beta=dd^c|z|^2$.
\end{definition}
\begin{remark}
The $m$-Hessian measure is well-defined for much more general functions than needed in this section. For further information see e.g.~\cite{Blocki_weak}.
\end{remark}

For a bounded $m$-hyperconvex domain in $\C^n$ we shall use the following notation
\[
\mathcal E_m^0(\Omega)=\bigg\{\varphi\in\mathcal {SH}_m(\Omega)\cap L^\infty (\Omega): \,\varphi\leq 0,\,\, \lim_{z\to\partial\Omega}\varphi (z)=0, \int_\Omega \operatorname{H}_m(\varphi)<\infty\bigg\}\, .
\]

 In Theorem~\ref{smoothexh} we shall prove that a $m$-hyperconvex domain admits an exhaustion function that is smooth, and strictly $m$-subharmonic. Our method is
  that of approximation. Therefore, we first need to prove a suitable approximation theorem. Theorem~\ref{app2} was first proved in the case $m=n$ by Cegrell~\cite{C}. If the approximating sequence $\{\psi_j\}$ only is continuous on $\Omega$, then the corresponding result was proved by
Cegrell~\cite[Theorem 2.1]{cegrell_gdm} in the case $m=n$, and Lu~\cite[Theorem 1.7.1]{L} for general $m$. In connection with Theorem~\ref{app2} we would
like to make a remark on Theorem~6.1 in a recent paper by Harvey et al.~\cite{HarveyLawsonPlis}. There they prove a similar approximation theorem, but
there is an essential difference. They assume that the underlying space should admit a negative exhaustion function that is $C^2$-smooth, and strictly $m$-subharmonic.  Thereafter, they prove that approximation is possible. Whereas we prove that smooth approximation is always
possible on an $m$-hyperconvex domain, i.e. there should only exist a negative exhaustion function. Thereafter we prove the existence of a negative and smooth
exhaustion function that is strictly $m$-subharmonic, and has bounded $m$-Hessian measure. We believe that Theorem~\ref{app2} is of interest in its own right.

\begin{theorem}\label{app2}
Assume that $\Omega$ is a bounded $m$-hyperconvex domain in $\mathbb C^n$. Then, for any negative $m$-subharmonic function $u$ defined on $\Omega$, there exists a decreasing sequence $\{\psi_j\}\subset \mathcal E_m^0(\Omega)\cap\mathcal C^{\infty}(\Omega)$
 such that $\psi_j\to u$, as $j\to \infty$.
\end{theorem}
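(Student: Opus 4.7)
\emph{Proof proposal.} The plan is a Cegrell-type construction: first reduce to bounded $u$, then glue a mollification of $u$ with a well-chosen reference element of $\mathcal{E}_m^0(\Omega)$ near the boundary, and finally smooth the result using the regularized maximum. Writing $u^{(k)}:=\max(u,-k)$ exhibits $u$ as the limit of a decreasing sequence of bounded negative $m$-subharmonic functions, so by a diagonal extraction it suffices to produce, for each bounded $-M\le u\le 0$, a decreasing sequence in $\mathcal{E}_m^0(\Omega)\cap\mathcal C^\infty(\Omega)$ converging to $u$.

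As the reference function I would fix a ball $B=B(w,r)\Subset\Omega$ and take $\phi:=\textbf{S}_B$ from Definition \ref{def2}. Theorem \ref{thm_prop}(3) (its proof) gives $\phi\in\mathcal{SH}_m(\Omega)\cap\mathcal C(\bar\Omega)$ with $\phi=-1$ on $B$ and $\phi=0$ on $\partial\Omega$; because $\phi$ is $m$-maximal on $\Omega\setminus\bar B$, the measure $(dd^c\phi)^m\wedge\beta^{n-m}$ is carried by $\bar B$ and has finite total mass, so $\phi\in\mathcal{E}_m^0(\Omega)$. For $\epsilon>0$ and $A>0$ set $\Omega_\epsilon:=\{z\in\Omega:\operatorname{dist}(z,\partial\Omega)>\epsilon\}$ and $u_\epsilon:=u*\rho_\epsilon$. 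Then $u_\epsilon$ is smooth and $m$-subharmonic on $\Omega_\epsilon$, the mapping $\epsilon\mapsto u_\epsilon$ is decreasing, and $u_\epsilon\searrow u$ pointwise. For fixed $\epsilon$, choosing $A$ large makes $A\phi>u_\epsilon$ on a neighborhood of $\partial\Omega_\epsilon$ inside $\Omega$ (since $\phi$ is strictly negative there while $u_\epsilon$ is uniformly bounded), while $A\phi\to 0$ at $\partial\Omega$. The gluing lemma, Theorem \ref{thm_basicprop1}(7), applied with the regularized maximum $\max_\eta$ in place of $\max$, then produces
\[
\psi_{\epsilon,A,\eta}(z)\;:=\;\begin{cases} \max_\eta\bigl(u_\epsilon(z),\,A\phi(z)\bigr), & z\in\Omega_\epsilon,\\ A\phi(z), & z\in\Omega\setminus\Omega_\epsilon, \end{cases}
\]
which is $m$-subharmonic on $\Omega$, vanishes at $\partial\Omega$, and satisfies $\psi_{\epsilon,A,\eta}\ge A\phi$; by the comparison principle $\int_\Omega\operatorname{H}_m(\psi_{\epsilon,A,\eta})\le A^m\int_\Omega\operatorname{H}_m(\phi)<\infty$, so $\psi_{\epsilon,A,\eta}\in\mathcal{E}_m^0(\Omega)$.

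Picking compatible sequences $\epsilon_j\searrow 0$, $A_j\nearrow\infty$, $\eta_j\searrow 0$, the sequence $\psi_j:=\psi_{\epsilon_j,A_j,\eta_j}$ should be decreasing in $j$: $u_{\epsilon_j}$ is decreasing (standard fact for convolutions of subharmonic functions with radial mollifiers), $A_j\phi$ is decreasing since $\phi<0$, and $\max_\eta$ is monotone in its arguments as well as in $\eta$. Pointwise convergence $\psi_j\to u$ on $\Omega$ follows because at each interior $z$, eventually $A_j\phi(z)+\eta_j<u_{\epsilon_j}(z)$, so $\psi_j(z)=u_{\epsilon_j}(z)\to u(z)$; if $u(z)=-\infty$, then both ingredients tend to $-\infty$ and so does their regularized maximum.

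The main obstacle is obtaining smoothness on \emph{all} of $\Omega$ rather than only on $\Omega_{\epsilon_j}$: the reference function $\phi=\textbf{S}_B$ is only continuous, so $A_j\phi$ is non-smooth in the collar $\Omega\setminus\Omega_{\epsilon_j}$. I would resolve this by replacing $\phi$, once and for all, by a smoother substitute produced via Richberg's approximation theorem (invoked in the proof of Theorem \ref{bm-reg}, $(1)\Rightarrow(3)$) applied to $\phi+\delta|z|^2$; the $\delta|z|^2$-term supplies the strict $m$-subharmonicity required by Richberg, and a subsequent outer gluing with a small multiple of $\phi$ restores the zero boundary values while preserving the finite $m$-Hessian mass through comparison with the original $\phi$. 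The technical heart of the proof is to balance the four small parameters (the convolution scale, the weight $A$, the regularization width $\eta$, and Richberg's error $\delta$) so that the resulting smooth functions form a genuinely decreasing sequence in $\mathcal{E}_m^0(\Omega)$ with pointwise limit $u$; everything else is bookkeeping on top of the two key tools, gluing (Theorem \ref{thm_basicprop1}(7)) and comparison in $\mathcal{E}_m^0$.
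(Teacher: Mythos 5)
Your overall strategy (mollify $u$ in the interior, glue with a multiple of a fixed reference function $\phi=\textbf{S}_{B}\in\mathcal E_m^0(\Omega)$ near the boundary, control the Hessian mass by comparison) is reasonable in spirit, but the central gluing step does not work as stated, and the parameter balancing you defer to ``bookkeeping'' is exactly where the construction breaks. The claim that ``choosing $A$ large makes $A\phi>u_\epsilon$ on a neighborhood of $\partial\Omega_\epsilon$'' has the inequality backwards: since $\phi$ is bounded away from $0$ on the compact set $\partial\Omega_\epsilon$ while $u_\epsilon\ge -M$, increasing $A$ drives $A\phi$ to $-\infty$ there, so $A\phi<u_\epsilon$ and the hypothesis of the gluing lemma (Theorem~\ref{thm_basicprop1}, part (7)), namely $u_\epsilon\le A\phi$ on $\partial\Omega_\epsilon$, fails; the glued function is then not even upper semicontinuous across $\partial\Omega_\epsilon$. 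To satisfy the gluing hypothesis one must instead take $A$ \emph{small}, roughly $A\le\min_{\partial\Omega_\epsilon}(u_\epsilon/\phi)$; but pointwise convergence $\psi_j\to u$ on a compact set $K$ forces $A_j\phi\le u_{\epsilon_j}$ eventually on $K$, i.e.\ $A_j\ge\sup_K(u_{\epsilon_j}/\phi)$, and these two constraints are incompatible whenever $u$ decays to $0$ at (part of) $\partial\Omega$ at least as fast as $\phi$ does --- already $u=\phi$ defeats the scheme. No single fixed reference function can serve as the outer comparison; it has to be manufactured from $u$ itself. Relatedly, the reduction via $\max(u,-k)$ does not touch the real difficulty, which is the boundary behaviour of $u$, not its size; and the monotonicity claim also fails in the collar $\Omega_{\epsilon_{j+1}}\setminus\Omega_{\epsilon_j}$, where $\psi_j=A_j\phi$ can be far below $\psi_{j+1}\ge u_{\epsilon_{j+1}}\ge -M$.

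The paper's proof is organized around precisely these points. Using Lu's approximation theorem (Theorem~1.7.1 in~\cite{L}) it first reduces to $u\in\mathcal E_m^0(\Omega)\cap\mathcal C(\bar\Omega)$ with $\operatorname{supp}(\operatorname{H}_m(u))\Subset\Omega$, and for such $u$ proves the multiplicative sandwich $au\le\psi\le u$, $1<a<2$, by an induction over sublevel sets $\Omega_j=\{\alpha<s_j\}$ of a strictly $m$-subharmonic exhaustion $\alpha$: on each $\Omega_j$ a convolution regularization of $bu+c$ (so the outer comparison is built from $u$, making the chain $au<bu+c<\varphi'+c<u$ available where it is needed) is glued with $2u$, the corners of the maxima are smoothed with Lemma~\ref{corapp}, and the function is never modified again on $\Omega_j$ --- that is how interior smoothness is obtained all the way out to the boundary, rather than via Richberg. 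Your Richberg repair is moreover circular on its own terms: after smoothing $\phi+\delta|z|^2$ you restore the zero boundary values by ``a subsequent outer gluing with a small multiple of $\phi$'', which reinserts the non-smooth $\phi$ in an outer collar. Finally, the decreasing property of the final sequence is obtained in the paper by the weights $\left(1-\tfrac1{j+1}\right)u_j\le\psi_j\le\left(1-\tfrac1j\right)u_j$, not by monotonicity of the separate ingredients. To salvage your approach you would need, at a minimum, to replace the fixed $\phi$ by outer functions adapted to $u$ and to supply a substitute for Lu's reduction; as written the proposal has a genuine gap.
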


Before proving Theorem~\ref{app2} we need the following lemma. The proof is as in~\cite{C}, and therefore it is omitted.

\begin{lemma}\label{corapp}
Let $u,v$ be smooth $m$-subharmonic functions in $\Omega$ and let $\omega$ be a neighborhood of the set $\{u=v\}$. Then there exists a smooth $m$-subharmonic function $\varphi$ such that $\varphi\geq \max \{u,v\}$
on $\Omega$ and $\varphi=\max \{u,v\}$ on $\Omega\setminus \omega$.
\end{lemma}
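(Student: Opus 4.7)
Plan: The statement is a smoothed/regularized maximum construction. My plan is to build $\varphi$ as $M_\varepsilon(u,v)$, where $M_\varepsilon:\RE^2\to\RE$ is Demailly's regularized max, and then verify that composition with smooth $m$-subharmonic functions preserves $m$-subharmonicity.

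Step one: construct $M_\varepsilon$. Let $\rho\in\mathcal{C}^\infty_c(\RE)$ be an even non-negative bump with $\int\rho=1$ supported in $[-1,1]$, and set
\[
M_\varepsilon(a,b)=\int_{\RE^2}\max(a+\varepsilon s,\,b+\varepsilon t)\,\rho(s)\rho(t)\,ds\,dt.
\]
Standard properties (see e.g.\ Lemma I.5.18 in Demailly's book on complex analytic geometry): $M_\varepsilon\in\mathcal{C}^\infty(\RE^2)$; $M_\varepsilon$ is convex and non-decreasing in each variable separately; $M_\varepsilon(a,b)\geq\max(a,b)$ with equality whenever $|a-b|\geq 2\varepsilon$; and $M_\varepsilon(a+c,b+c)=M_\varepsilon(a,b)+c$.

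Step two (the main technical point): if $\gamma\in\mathcal{C}^\infty(\RE^2)$ is convex and non-decreasing in each variable, and if $u,v$ are smooth $m$-subharmonic functions, then $\gamma(u,v)$ is $m$-subharmonic. Direct computation gives
\[
dd^c\gamma(u,v)=\gamma_1\,dd^cu+\gamma_2\,dd^cv+\sum_{i,j=1}^{2}\gamma_{ij}\,d\phi_i\wedge d^c\phi_j,
\]
with $\phi_1=u$, $\phi_2=v$. Because $\gamma$ is non-decreasing, $\gamma_1,\gamma_2\geq 0$, so the first two terms are pointwise $m$-subharmonic in the current sense by Definition~\ref{m-sh}. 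Because $(\gamma_{ij})$ is positive semi-definite at every point (convexity of $\gamma$), writing $(\gamma_{ij})=A^T\!A$ and setting $\psi_\ell=\sum_j A_{\ell j}\phi_j$ yields
\[
\sum_{i,j}\gamma_{ij}\,d\phi_i\wedge d^c\phi_j=\sum_\ell d\psi_\ell\wedge d^c\psi_\ell,
\]
which is a sum of non-negative $(1,1)$-forms and hence lies in $\overline{\Gamma_n}\subset\overline{\Gamma_m}$. Wedging everything with $\alpha_1\wedge\cdots\wedge\alpha_{m-1}\wedge\beta^{n-m}$ for $\alpha_i\in\Gamma_m$ therefore produces a non-negative $(n,n)$-current, which gives the claim. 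Applying this with $\gamma=M_\varepsilon$ shows $M_\varepsilon(u,v)\in\mathcal{SH}_m(\Omega)\cap\mathcal{C}^\infty(\Omega)$.

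Step three: choose $\varepsilon$ so that $M_\varepsilon(u,v)=\max\{u,v\}$ on $\Omega\setminus\omega$. Since $\{u=v\}\subset\omega$ and $u,v$ are continuous, on any compact set $K\subset\Omega\setminus\omega$ we have $|u-v|\geq c_K>0$, so any $\varepsilon<c_K/2$ works on $K$. In the generic local use of this lemma this suffices; to get a single $\varepsilon$ that works globally, one exhausts $\Omega$ by compacts $K_j\nearrow\Omega$ and replaces the constant $\varepsilon$ by a smooth positive function $\varepsilon(z)$ decreasing fast enough toward $\partial\Omega$ — this is harmless because $M_{\varepsilon(z)}(u,v)$ still equals $\max\{u,v\}$ outside $\omega$, and the previous computation of $dd^c$ acquires only extra non-negative contributions from the convex function $M$ evaluated along the smooth curve $(u(z)+\varepsilon(z)s,v(z)+\varepsilon(z)t)$; alternatively, one does the construction locally and glues via part (7) of Theorem~\ref{thm_basicprop1}, using that $M_\varepsilon\geq\max\{u,v\}$ so the gluing boundary condition is automatic.

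The main obstacle is Step two: verifying that the quadratic form $\sum\gamma_{ij}\,d\phi_i\wedge d^c\phi_j$ really is a \emph{positive} (not merely $\Gamma_m$-admissible) $(1,1)$-form; once this is in hand the rest is bookkeeping. The technical subtlety in Step three — uniform versus variable $\varepsilon$ — is the secondary point, resolved either by the gluing construction of Theorem~\ref{thm_basicprop1}(7) (using $M_\varepsilon\geq\max\{u,v\}$) or by a smooth variable radius.
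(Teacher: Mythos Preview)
Your approach via Demailly's regularized maximum $M_\varepsilon$ is exactly the intended one (the paper omits the proof and cites Cegrell, whose argument is precisely this construction). Steps~1 and~2 are correct; in Step~2 the pointwise square-root decomposition shows the Hessian term is a semi-positive $(1,1)$-form, which is what is needed---though strictly speaking the $\psi_\ell$ are not global functions since $A$ depends on $z$: the honest statement is that pointwise one obtains $2i\sum_\ell\eta_\ell\wedge\bar\eta_\ell$ for $(1,0)$-forms $\eta_\ell=\sum_i A_{\ell i}\,\partial\phi_i$, which suffices.

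Step~3 has a genuine gap. Your fix~(b) fails outright: Theorem~\ref{thm_basicprop1}(7) glues by taking a pointwise maximum, which destroys smoothness, so it cannot produce the smooth $\varphi$ you want. Your fix~(a) is in the right spirit but the justification is not a proof. If one sets $F(a,b,e)=M_e(a,b)$, then $F$ is indeed jointly convex and non-decreasing in all three arguments (monotonicity in $e$ follows from Jensen applied to dilations of $\rho$), so your Step~2 computation would give $m$-subharmonicity of $F(u,v,\varepsilon)$ \emph{provided $\varepsilon$ itself is $m$-subharmonic}; but the term $F_e\,dd^c\varepsilon$ has no reason to lie in $\overline{\Gamma_m}$ for an arbitrary smooth positive $\varepsilon$, and finding a positive $m$-subharmonic $\varepsilon$ that is forced small near portions of $\partial\Omega$ runs into the maximum principle. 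The clean resolution is to observe that in every application of the lemma---both in Cegrell's paper and in the present one---one in fact has $\inf_{\Omega\setminus\omega}|u-v|>0$. For instance, in Theorem~\ref{smoothexh} the coincidence set $\{\psi=(|z|^2-M)/j\}$ is compact in $\Omega$ and the two functions remain a definite distance apart near $\partial\Omega$; in Theorem~\ref{app2} the smoothing is carried out on a region where both inputs are smooth and bounded apart outside the chosen $W$. Under this (tacit) extra hypothesis a single small constant $\varepsilon$ already works and your argument is complete.
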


Now to the proof of Theorem~\ref{app2}.

\begin{proof}[Proof of Theorem~\ref{app2}]
By Theorem~\ref{thm_prop}, property (3), we can always find a continuous and negative exhaustion function $\alpha$ for $\Omega$ that is strictly $m$-subharmonic.

\medskip

We want to  prove that for any $u\in \mathcal E_m^0(\Omega)\cap \mathcal C(\bar \Omega)$ with $\operatorname {supp}(\operatorname{H}_m(u))\Subset \Omega$, and for any $a\in (1,2)$, there exists $\psi\in \mathcal E_m^0(\Omega)\cap\mathcal C^{\infty}(\Omega)$
such that
\begin{equation}\label{eq1}
au\leq \psi\leq u.
\end{equation}
We shall do it in several steps.

\emph{Step 1.} Fix a constant $s<0$ such that
\[
\operatorname {supp}(\operatorname{H}_m(u))\Subset \Omega_0=\{z \in \Omega: \alpha(z)<s\}\, ,
\]
and let $1<b<a<2$ and $c<0$ be constants such that
$au<bu+c$ in a neighborhood of $\bar \Omega_0$. Note that we have
\[
\bar{\Omega}_0\subset \{au<c\}\subset \{2u<c\}\, .
\]
By using standard regularization by convolution (Theorem \ref{thm_basicprop1}) we can construct a sequence $\phi_j'$ of smooth $m$-subharmonic functions decreasing to $bu$. Out of this sequence pick one function, $\varphi_0'$, that is smooth in a neighborhood of the set $\{2u\leq c\}$, and such that $\varphi_0'<u$ on
$\bar{\Omega}_0$. Next, define
\[
\varphi_0=\begin{cases}
\max\{2u,\varphi_0'+c\} & \text{ on }  \{2u<c\}, \\
2u, &\text{ on } \{2u\geq c\}.
\end{cases}
\]
Then by construction we have that $\varphi_0\in\mathcal{E}_m^0(\Omega)\cap\mathcal C(\bar\Omega)$. Furthermore, on a neighborhood of $\bar{\Omega}_0$ we have $\varphi_0=\varphi_0'+c$, since
\[
2u<au<bu+c<\varphi_0'+c\, .
\]
With the definition
\[
\tilde \varphi_0=\sup \{v\in \mathcal {SH}_m(\Omega): v\leq \varphi_0 \,\text { on } \, \Omega_0, v\leq 0\}\, ,
\]
we get that $\tilde \varphi_0=\textbf{S}_f$, where
\[
f=\begin{cases}
\varphi_0 & \text{ on } \, \Omega_0, \\
h & \text{ on } \, \bar \Omega\setminus \Omega_0\, ,
\end{cases}
\]
is a continuous function. Here $h$ is the unique harmonic function on $\Omega\setminus \Omega_0$ that is continuous up to the boundary, $h=\varphi_0$ on $\partial \Omega_0$ and $h=0$ on $\partial \Omega$. Thanks to the generalized Walsh theorem (Theorem~\ref{walsh}) we have
that $\tilde \varphi_0\in \mathcal {SH}_m(\Omega)\cap \mathcal C(\bar \Omega)$.  Furthermore,
\[
au<bu+c\leq \varphi_0'+c=\varphi_0=\tilde \varphi_0<\varphi_0'<u\qquad  \text{ on }\bar \Omega_0\, .
\]
Thus, we see that
\[
au<\tilde \varphi_0<u \qquad \text { on } \bar \Omega\, .
\]
The set $\{au\leq \varphi_0\}\subset \{2u\leq c\}$ is compact, and therefore we have that $\varphi_0$ is smooth in a neighborhood of $\{au\leq \varphi_0\}$.

\medskip

\emph{Step 2.} Let $\Omega_0'$ be a given domain such that $\Omega_0\Subset \Omega_0'\Subset \Omega$. We shall construct functions $\varphi_1$, $\tilde \varphi_1$, and a domain $\Omega_1$ with the following properties;
\begin{enumerate}\itemsep2mm
\item $\Omega_0'\Subset \Omega_1\Subset \Omega$ and $\Omega_1=\{\alpha<s_1\}$, for some $s_1<0$;
\item $\varphi_1,\tilde \varphi_1\in \mathcal E_m^0(\Omega)\cap\mathcal C(\bar \Omega)$;
\item $\varphi_0=\varphi_1$ on $\Omega_0$;
\item $au<\tilde \varphi_1<u$ on $\Omega$;
\item $\varphi_1=\tilde \varphi_1$ on $\Omega_1$;
\item $\{au\leq \varphi_1\}\Subset \Omega$ and
\item $\varphi_1$ is smooth in a neighborhood of $\{au\leq \varphi_1\}$.
\end{enumerate}
We start by taking $s_1<0$ such that
\[
\Omega_0'\Subset \Omega_1=\{\alpha<s_1\}\Subset \Omega\, .
\]
and $\varphi_0<au$ on $\partial \Omega_1$. This is possible since the set
$\{au\leq \varphi_0\}$ is compact. Let $1<b<a$, and $c<d<0$, with the properties that
\[
au<bu+d<\tilde \varphi_0 \, \text{ on a neighborhood of }\, \bar \Omega_1\, .
\]
Once again using standard approximation by convolutions, let $\phi_j''$ be a sequence of smooth $m$-subharmonic functions decreasing to $bu+d$. Take one function from this sequence, call it $\varphi_1''$, such that it is smooth in a neighborhood of $\{2u\leq d\}$, and
\[
\varphi_1''<\tilde \varphi_0 \qquad \text{ on } \bar \Omega_1\, .
\]
The definition
\[
\varphi_1'=\begin{cases} \max \{\varphi_1'',2u\} & \text{ on } \{2u<d\}, \\
2u & \text{ on }  \{2u\geq d\}
\end{cases}
\]
yields that $\varphi_1'\in \mathcal E_m^0(\Omega)\cap\mathcal C(\bar\Omega)$, and we have that $\varphi_1'=\varphi_1''$ near $\{au\leq \varphi_1'\}$.

Take an open set $W$ such that
\[
\{au<\varphi_0=\varphi_1'\}\Subset W\Subset \{au<\min(\varphi_0,\varphi_1')\}\setminus \bar \Omega_0\, ,
\]
therefore by Lemma~\ref{corapp} there exists $\varphi_1\in \mathcal E_m^0(\Omega)$ such that $\varphi_1<u$ on $\Omega$, and with $\varphi_1\geq \max\{\varphi_0,\varphi_1'\}$ with equality on $\Omega_0$. Furthermore,
$\varphi_1$ is smooth on $W$ and $\varphi_1=\varphi_0$ on $\Omega_0$. It also follows that $\varphi_1$ is smooth near $\{au\leq \varphi_1\}$ which contains $\bar \Omega_1$, since $\varphi_1=\varphi_1'$
if $\varphi_0<au\leq \varphi_1$. Both functions $\varphi_0$, and $\varphi_1'$,  are smooth near
\[
\{au\leq \varphi_0\}\cap \{au\leq \varphi_1'\}\, .
\]
Let us define
\[
\tilde \varphi_1=\sup \{v\in \mathcal {SH}_m(\Omega): v\leq \varphi_1 \,\text { on } \, \Omega_1, v\leq 0\}\, ,
\]
then as in \emph{Step 1} it follows that $\tilde \varphi_1\in \mathcal {SH}_m(\Omega)\cap \mathcal C(\bar \Omega)$. The constructions
$\varphi_1$, $\tilde \varphi_1$ and $\Omega_1$ satisfy  all the conditions (1)-(7).

\medskip

\emph{Step 3}. Now if $\Omega_j\nearrow \Omega$, then the function
\[
\psi=\lim_{j\to \infty}\varphi_j\in\mathcal E_m^0(\Omega)\, .
\]
Furthermore, $\psi$ is smooth since  for any domain $\omega\Subset\Omega$ there exists $j_{\omega}$ such
that on the set $\omega$ we have $\psi=\varphi_{j_{\omega}}\in \mathcal C^{\infty}$. This ends the proof of (\ref{eq1}).

To finish the proof of this theorem, assume that $u$ is a negative
$m$-subharmonic function defined on $\Omega$. Theorem 1.7.1 in~\cite{L} implies that there exists a decreasing sequence $\{u_j\}\subset \mathcal E_m^0(\Omega)\cap \mathcal C(\bar \Omega)$, $\operatorname{supp}(\operatorname{H}_m(u_j))\Subset \Omega$, such that $u_j\to u$, as $j\to \infty$. Then by~(\ref{eq1}) there exists a sequence $\psi_j\in \mathcal E_m^0(\Omega)\cap\mathcal C^{\infty}(\Omega)$ with
\[
\left(1-\frac {1}{j+1}\right)u_j\leq \psi_j\leq \left(1-\frac 1j\right) u_j\, ,
\]
and the proof is finished.
\end{proof}

We shall end this paper by proving the implication $(1)\Rightarrow(3)$ in Theorem~\ref{thm_mhx}.

\begin{theorem}\label{smoothexh}
 Assume that $\Omega$ is a $m$-hyperconvex domain in $\C^n$, $n\geq 2$, $1\leq m\leq n$.  Then $\Omega$ admits an exhaustion function that is
 negative, smooth, strictly $m$-subharmonic, and has bounded $m$-Hessian measure.
\end{theorem}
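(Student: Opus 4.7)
The plan is to combine Theorem~\ref{thm_prop}(3) with Theorem~\ref{app2}. The former already provides a continuous, negative, strictly $m$-subharmonic exhaustion, and the latter allows us to regularize any negative $m$-subharmonic function by a decreasing sequence of $C^{\infty}$ elements of $\mathcal{E}_m^0(\Omega)$. Since membership in $\mathcal{E}_m^0(\Omega)$ automatically encodes negativity, boundary decay and bounded $m$-Hessian mass, four of the five required features will fall out for free; only strict $m$-subharmonicity will need extra care.

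Concretely, I would first apply Theorem~\ref{thm_prop}(3) to produce a negative, continuous, strictly $m$-subharmonic exhaustion $u$ on $\Omega$. I would then apply Theorem~\ref{app2} to $u$ and obtain a decreasing sequence $\{\psi_j\}\subset\mathcal{E}_m^0(\Omega)\cap C^{\infty}(\Omega)$ with $\psi_j\searrow u$. By the very definition of $\mathcal{E}_m^0(\Omega)$, each $\psi_j$ is smooth, negative, satisfies $\lim_{z\to\partial\Omega}\psi_j(z)=0$ (so is an exhaustion function), and has $\int_\Omega (dd^c\psi_j)^m\wedge\beta^{n-m}<\infty$. The only missing property is strict $m$-subharmonicity.

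To extract strict $m$-subharmonicity, I would trace the internal construction of $\psi_j$ inside the proof of Theorem~\ref{app2} and check that the strict $m$-subharmonicity of $u$ propagates through it. The two operations used there are standard convolutions $u*\rho_\varepsilon$, which preserve strict $m$-subharmonicity (since $(u-c|z|^2)*\rho_\varepsilon$ being $m$-subharmonic forces $u*\rho_\varepsilon-c|z|^2$ to be $m$-subharmonic up to a harmless additive constant), and the regularized maximum supplied by Lemma~\ref{corapp}, which, when applied to two smooth strictly $m$-subharmonic inputs, can be arranged to remain strictly $m$-subharmonic with a slightly smaller positive constant. Since Step~3 of the proof of Theorem~\ref{app2} says that on each $\omega\Subset\Omega$ the function $\psi_j$ coincides with a finite composition of such operations applied to the strictly $m$-subharmonic $u$, the function $\psi_j$ is strictly $m$-subharmonic on $\omega$, and therefore on $\Omega$. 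Picking any one such $\psi_j$ then yields the exhaustion function required by the theorem.

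The hard part is the intermediate envelope step $\tilde\varphi_j=\sup\{v\in\mathcal{SH}_m(\Omega):v\leq\varphi_j\text{ on }\Omega_j,\ v\leq 0\}$ appearing inside the proof of Theorem~\ref{app2}: this envelope is $m$-harmonic off $\Omega_j$ and therefore is certainly not strictly $m$-subharmonic there. The point that rescues the argument is that the final $\psi_j$ is the limit of the $\varphi_j$'s and not of the $\tilde\varphi_j$'s, and each $\varphi_j$ is produced by the regularized maximum of the previous $\varphi_{j-1}$ (strictly $m$-subharmonic by induction) with a convolution of $u$ (strictly $m$-subharmonic); so strict $m$-subharmonicity is preserved inductively. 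Should the tracing be delicate at some step, a safety net is to upgrade $\psi_j$ by a final application of Demailly's Richberg-type approximation theorem (cf.~\cite{demailly_bok,plis}) to perturb it smoothly into strict $m$-subharmonicity without disturbing smoothness, negativity, the boundary decay, or the bound on the $m$-Hessian mass.
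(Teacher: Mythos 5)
Your reduction is fine up to the last step: taking the smooth exhaustion from Theorem~\ref{app2}, every required property except strict $m$-subharmonicity is indeed automatic from membership in $\mathcal E_m^0(\Omega)\cap\mathcal C^\infty(\Omega)$. The genuine gap is your claim that strictness of the input $u$ propagates through the construction of Theorem~\ref{app2}. It does not, and it cannot: the proof of Theorem~\ref{app2} first replaces $u$ by the Cegrell--Lu approximants $u_j\in\mathcal E_m^0(\Omega)\cap\mathcal C(\bar\Omega)$ with $\operatorname{supp}(\operatorname{H}_m(u_j))\Subset\Omega$, and the key inequality (\ref{eq1}) is only proved under this support hypothesis. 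A bounded strictly $m$-subharmonic function $v$ satisfies $\operatorname{H}_m(v)\geq c^m\beta^n>0$ on every compact subset, so no strictly $m$-subharmonic function meets that hypothesis; consequently the functions actually fed into the convolutions and into Lemma~\ref{corapp} are $m$-maximal near $\partial\Omega$, and neither convolution nor a regularized maximum creates strict positivity that is not already present. So ``picking any one such $\psi_j$'' gives a smooth element of $\mathcal E_m^0(\Omega)$ that in general fails to be strictly $m$-subharmonic. Your safety net fails for the same reason: Richberg-type smoothing (Demailly, Pli\'s) preserves strictness of a continuous strictly $m$-subharmonic input, it does not upgrade a smooth non-strict function to a strict one; and the naive repair $\psi_j+\varepsilon(|z|^2-M)$ ruins the exhaustion property, since its boundary limit is strictly negative.

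The paper's proof supplies precisely the ingredient you are missing. It uses Theorem~\ref{app2} only to obtain \emph{one} function $\psi\in\mathcal E_m^0(\Omega)\cap\mathcal C^\infty(\Omega)$, then sets $\psi_j=\max\{\psi,(|z|^2-M)/j\}$, smooths the corner with Lemma~\ref{corapp} to get $\varphi_j\in\mathcal E_m^0(\Omega)\cap\mathcal C^\infty(\Omega)$ with $\varphi_j=\psi_j$ off a small neighborhood of $\{\psi=(|z|^2-M)/j\}$, and finally takes $\varphi=\sum_j a_j\varphi_j$ with
\[
a_j=\frac{1}{2^j\max\left\{\|\varphi_j\|_\infty,\,h_j^{1/m},\,1\right\}}\,,\qquad h_j=\int_\Omega\operatorname{H}_m(\varphi_j)\, .
\]
On any $\Omega'\Subset\Omega$ one has $\varphi_j=(|z|^2-M)/j$ for all large $j$, so the tail of the series is a positive multiple of $|z|^2$ plus a constant; this is what yields smoothness and \emph{strict} $m$-subharmonicity of $\varphi$, while the weights $h_j^{1/m}$ give $\int_\Omega\operatorname{H}_m(\varphi)\leq 1$ and the normalization keeps $\varphi$ a negative exhaustion. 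Without this (or an equivalent) construction, your argument does not reach the conclusion.
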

\begin{proof} Theorem~\ref{app2} implies that there exists a function $\psi\in \mathcal E_m^0(\Omega)\cap \mathcal C^{\infty}(\Omega)$. Let $M>0$ be a constant such that
\[
|z|^2-M<-1 \qquad  \text{ on } \Omega\, ,
\]
and define
\[
\psi_j(z)=\max \left\{\psi(z),\frac {|z|^2-M}{j}\right\}\in \mathcal E_m^0(\Omega)\cap\mathcal C(\Omega)\, .
\]
This construction also implies that $\psi_j$ is smooth outside a neighborhood $\omega$ of the set
\[
\left\{\psi(z)=\frac {|z|^2-M}{j}\right\}\, .
\]
Lemma~\ref{corapp} implies that there exists $\varphi_j\in \mathcal E_m^0(\Omega)\cap\mathcal C^{\infty}(\Omega)$ such that $\varphi_j=\psi_j$ outside $\omega$.
Now we choose a sequence $a_j\in (0,1)$ such that the function
\[
\varphi=\sum_{j=1}^{\infty}a_j\varphi_j
\]
is smooth, strictly $m$-subharmonic, and belongs to $\mathcal E_m^0(\Omega)$. It is sufficient to take
\[
a_j=\frac {1}{2^j\max\left\{\|\varphi_j\|_{\infty}, h_j^{\frac 1m}, 1\right\}}\, , \quad \text { where } \, h_j=\int_{\Omega}\operatorname{H}_m(\varphi_j).
\]
Note here that $|\varphi|\leq 1$. The construction
\[
u_n=\sum_{j=1}^na_j\varphi_j
\]
implies that $u_n\in \mathcal E_m^0(\Omega)$, and $u_n\searrow \varphi$, as $n\to\infty$.  Using standard arguments, and finally by passing to the limit with $n$,
we arrive at
\[
\int_{\Omega}\operatorname{H}_m(\varphi)\leq \left(\sum_{j=1}^{\infty}a_j\left(\int_{\Omega}\operatorname{H}_m(\varphi_j)\right)^{\frac 1m}\right)^m\leq 1\, .
\]
Let us conclude this proof by motivating why $\varphi$ is necessarily smooth, and strictly $m$-subharmonic. Let $\Omega'\Subset \Omega$, then there exists an index $j_\omega$ such that on $\Omega'$ we have that
\[
\varphi_j=\frac{|z|^2-M}{j}\qquad \text{ for } j>j_\omega\, .
\]
This gives us that
\[
\varphi=\sum_{j=1}^{j_\omega}a_j \varphi_j+\sum_{j=j_\omega+1}^{\infty}a_j \left(\frac{|z|^2-M}{j}\right) \qquad \text { on }\, \Omega'\, .
\]
\end{proof}


\begin{thebibliography}{99}

\bibitem{Abdullaev1} Abdullaev B. I., Subharmonic functions on complex hyperplanes of $\C^n$. J.
Sib. Fed. Univ. Math. Phys. (2013), Volume 6, Issue 4, 409-416.

\bibitem{Abdullaev2} Abdullaev B. I., $\mathcal{P}$-measure in the class of $m-wsh$ functions. J. Sib. Fed.
Univ. Math. Phys. (2014), Volume 7, Issue 1, 3-9.

\bibitem{SA}  Abdullaev B. I., Sadullaev A., Potential theory in the class of $m$-subharmonic functions. Proc. Steklov Inst. Math.  279  (2012),  no. 1, 155-180.

\bibitem{armitage} Armitage D. H., Gardiner S. J., Classical potential theory. Springer Monographs in Mathematics. Springer-Verlag London, Ltd., London, 2001.

\bibitem{avelin} Avelin B., Hed L., Persson H., A note on the hyperconvexity of pseudoconvex domains beyond Lipschitz regularity. Potential Anal.  43  (2015),  no. 3, 531-545.

\bibitem{aytuna} Aytuna A., On Stein manifolds $M$ for which $\mathcal{O}(M)$  is isomorphic to $\mathcal{O}(\Delta^n)$ as Fr\'{e}chet spaces.
Manuscripta Math. 62 (1988),  no. 3, 297-315.

\bibitem{Blocki_LN} B{\l}ocki Z., The complex Monge-Amp\`{e}re operator in pluripotential theory. Lecture Notes, 2002.
(\texttt{http://gamma.im.uj.edu.pl/$\sim$blocki/}).

\bibitem{Blocki_MA} B{\l}ocki Z., The complex Monge-Amp\`{e}re operator in hyperconvex domains. Ann. Scuola Norm. Sup. Pisa Cl. Sci. (4) 23 (1996),  no. 4, 721-747.

\bibitem{Blocki_weak} B{\l}ocki Z., Weak solutions to the complex Hessian equation. Ann. Inst. Fourier (Grenoble)  55  (2005),  no. 5, 1735-1756.

\bibitem{bouligand} Bouligand G., Sur le probl\`{e}me de Dirichlet. Ann. Soc. Polon. Math. 4 (1926), 59-112.

\bibitem{brelot} Brelot M., Familles de Perron et probl\`{e}me de
 Dirichlet. Acta. Litt. Sci. Szeged  9 (1939), 133-153.

\bibitem{brelot2} Brelot M., On topologies and boundaries in potential theory. Lecture Notes in Mathematics, Vol. 175 Springer-Verlag, Berlin-New York 1971.

\bibitem{bremermann} Bremermann H. J., On a generalized Dirichlet
  problem for plurisubharmonic functions and pseudo-convex domains.
Characterization of \u{S}ilov boundaries.
  Trans. Amer. Math. Soc. 91 (1959), 246-276.

\bibitem{catlin} Catlin D. W., Global regularity of the $\bar{\partial}$-Neumann problem. Complex analysis of several variables (Madison, Wis., 1982),  39-49, Proc. Sympos. Pure Math., 41, Amer. Math. Soc., Providence, RI, 1984.

\bibitem{carat} Carath\'{e}odory C., On the Dirichlet's Problem.
Amer. J. Math. 59 (1937), no. 4, 709-731.


\bibitem{CHS} Caffarelli L., Nirenberg L., Spruck J., The Dirichlet problem for nonlinear second-order elliptic equations. III. Functions of the eigenvalues of the Hessian. Acta Math. 155 (1985),  no. 3-4, 261-301.

\bibitem{CCW} Carlehed M., Cegrell U., Wikstr\"om F., Jensen measures, hyperconvexity and boundary behaviour of the pluricomplex Green function. Ann. Polon. Math. 71 (1999), no. 1, 87-103.

\bibitem{cegrell_gdm} Cegrell U., The general definition of the complex Monge-Amp\`{e}re operator. Ann. Inst. Fourier (Grenoble)  54 (2004),  no. 1,  159-179.

\bibitem{C} Cegrell, U., Approximation of plurisubharmonic functions in hyperconvex domains.  Complex analysis and digital geometry,  125-129, Acta Univ. Upsaliensis Skr. Uppsala Univ. C Organ. Hist., 86, Uppsala Universitet, Uppsala, 2009.

\bibitem{ColeRansford1} Cole B. J., Ransford T. J., Subharmonicity without upper semicontinuity. J. Funct. Anal.  147  (1997),  no. 2, 420-442.

\bibitem{ColeRansford2} Cole B. J., Ransford T. J., Jensen measures and harmonic measures. J. Reine Angew. Math.  541  (2001), 29-53.

\bibitem{demailly_bok} Demailly J.-P., Complex analytic and differential geometry. Self published e-book.\\
(\texttt{http://www-fourier.ujf-grenoble.fr/$\sim$demailly/}).

\bibitem{DinewDinew} Dinew S., Ko\l odziej S., A priori estimates for complex Hessian equations. Anal. PDE  7  (2014),
no. 1, 227-244.

\bibitem{doob} Doob J. L., Classical potential theory and its probabilistic counterpart.
Grundlehren der Mathematischen Wissenschaften, 262. Springer-Verlag, New York, 1984.

\bibitem{edwards} Edwards D. A., Choquet boundary theory for certain spaces of lower semicontinuous functions. Function Algebras (Proc. Internat. Sympos. on Function Algebras, Tulane Univ., 1965), 300-309,  Scott-Foresman, Chicago, Ill, 1966.

\bibitem{DiederichFornaess} Diederich K., Forn\ae ss J.-E., Pseudoconvex domains: bounded strictly plurisubharmonic exhaustion functions. Invent. Math. 39 (1977),
no. 2, 129-141.

\bibitem{Gaarding} G\aa rding L., An inequality for hyperbolic polynomials. J. Math. Mech. 8 (1959), 957-965.

\bibitem{Gamelin}  Gamelin T. W., Uniform algebras and Jensen measures. London Mathematical Society Lecture Note Series, 32. Cambridge University Press, Cambridge-New York, 1978.

\bibitem{GamelinSibony} Gamelin T. W., Sibony N., Subharmonicity for uniform algebras. J. Funct. Anal.  35  (1980), no. 1, 64-108.

\bibitem{Gogus} G\"o\u g\"u\c s N. G., Local and global $C$-regularity. Trans. Amer. Math. Soc.  360  (2008),  no. 5, 2693-2707.


\bibitem{GogusSahutoglu} G\"o\u g\"u\c s N. G., \c Sahuto\u glu S., Continuity of plurisubharmonic envelopes in $\C^2$. Internat. J. Math.  23  (2012),  no. 12, 1250124, 12 pp.

\bibitem{HarveyLawson1} Harvey F. R., Lawson H. B., Jr., Calibrated geometries. Acta Math.  148  (1982), 47-157.

\bibitem{HarveyLawson2} Harvey F. R., Lawson H. B., Jr.,  Plurisubharmonicity in a general geometric context.  Geometry and analysis. No. 1,  363-402, Adv. Lect. Math. (ALM), 17, Int. Press, Somerville, MA, 2011.

\bibitem{HarveyLawson3} Harvey F. R., Lawson H. B., Jr., An introduction to potential theory in calibrated geometry. Amer. J. Math.  131  (2009),  no. 4, 893-944.

\bibitem{HarveyLawson4} Harvey F. R., Lawson H. B., Jr., Geometric plurisubharmonicity and convexity: an introduction. Adv. Math.  230  (2012),  no. 4-6, 2428-2456.

\bibitem{HarveyLawsonPlis} Harvey F. R., Lawson H. B., Jr., Pli\'{s} S., Smooth approximation of plurisubharmonic functions on almost complex manifolds. Math. Ann.  366  (2016),  no. 3-4, 929-940.

\bibitem{HP} Hed L., Persson H., Plurisubharmonic approximation and boundary values of plurisubharmonic functions. J. Math. Anal. Appl.  413  (2014),  no. 2, 700-714.

\bibitem{hormander} H\"{o}rmander L., Notions of convexity. Progress in Mathematics, 127. Birkh\"auser Boston, Inc., Boston, MA, 1994.

\bibitem{huang} Huang Y., Xu L., Regularity of radial solutions to the complex Hessian equations. J. Partial Differ. Equ.  23  (2010),  no. 2, 147-157.

\bibitem{jarnicki_pflug} Jarnicki M., Pflug P., First Steps in Several Complex Variables: Reinhardt Domains. EMS Textbooks in Mathematics. European Mathematical Society (EMS), Z\"{u}rich, 2008.

\bibitem{KR} Kerzman N., Rosay J.-P., Fonctions plurisousharmoniques d'exhaustion born\'{e}es et domaines taut. Math. Ann. 257 (1981), no. 2, 171-184.

\bibitem{K} Klimek M., Pluripotential theory. London Mathematical Society Monographs. New Series, 6. Oxford Science Publications. The Clarendon Press, Oxford University Press, New York, 1991.

\bibitem{landkof} Landkof N. S., Foundations of modern potential theory. Die Grundlehren der mathematischen Wissenschaften, Band 180. Springer-Verlag, New York-Heidelberg, 1972.

\bibitem{Lebesgue1} Lebesgue H., Sur des cas d'impossibilit\'{e} du probl\`{e}me de Dirichlet ordinaire. C. R. Seances
Soc. Math. France 17 (1912).

\bibitem{Lebesgue2} Lebesgue H., Conditions de r\'{e}gularit\'{e}, conditions d'irr\'{e}gularit\'{e}, conditions d'impossibilit\'{e}
dans le probl\`{e}me de Dirichlet. C. R. Acad. Sci. Paris 178 (1924), 349-354.

\bibitem{lelong}  Lelong P., D\'{e}finition des fonctions plurisousharmoniques. C. R. Acad. Sci. Paris  215 (1942), 398-400.

\bibitem{Li} Li S.-Y., On the Dirichlet problems for symmetric function equations of the eigenvalues of the complex Hessian. Asian J. Math.  8 (2004),  no. 1, 87-106.

\bibitem{L} Lu H.-C., Complex Hessian equations. Doctoral thesis, University of Toulouse III Paul Sabatier, 2012.

\bibitem{L2} Lu H.-C., Solutions to degenerate complex Hessian equations. J. Math. Pures Appl. (9) 100 (2013),  no. 6, 785-805.

\bibitem{Lukes et al}  Luke\v{s} J., Mal\'{y} J., Netuka I., Spurn\'{y} J., Integral representation theory. Applications to convexity, Banach spaces and potential theory. De Gruyter Studies in Mathematics, 35. Walter de Gruyter \& Co., Berlin, 2010.

\bibitem{N} Nguyen N. C., Subsolution theorem for the complex Hessian equation. Univ. Iagiel. Acta Math. 50 (2013), 69-88.


\bibitem{oka} Oka K., Sur les fonctions analytiques de plusieurs variables. VI. Domaines pseudoconvexes. T\^{o}hoku Math. J.  49 (1942), 15-52.

\bibitem{phong} Phong D., Picard S., Zhang X., A second order estimate for general complex Hessian equations. Anal. PDE  9  (2016),  no. 7, 1693-1709.

\bibitem{plis} Pli\'{s} S., The smoothing of $m$-subharmonic functions. Manuscript (2013),  arXiv:1312.1906.

\bibitem{Poincare} Poincar\'{e} H., Sur les \'{e}quations aux d\'{e}riv\'{e}es partielles de la physique math\'{e}matique. Amer. J. Math.  12  (1890),  no. 3, 211-294.

\bibitem{perron} Perron O., Eine neue Behandlung der erten Randwertaufgabe f\"ur
$\Delta u=0$. Math. Z.  Math. Z.  18  (1923),  no. 1, 42-54.

\bibitem{Ransford}  Ransford T. J., Jensen measures.  Approximation, complex analysis, and potential theory (Montreal, QC, 2000),  221-237, NATO Sci. Ser. II Math. Phys. Chem., 37, Kluwer Acad. Publ., Dordrecht, 2001.

\bibitem{rickart} Rickart C. E., Plurisubharmonic functions and convexity properties for general function algebras. Trans. Amer. Math. Soc. 169 (1972), 1-24.

\bibitem{stehle} Stehl\'{e} J.-L., Fonctions plurisousharmoniques et convexit\'{e} holomorphe de certains fibr\'{e}s analytiques. S\'{e}minaire Pierre Lelong (Analyse), Ann\'{e}e 1973-1974,  155-179. Lecture Notes in Math., Vol. 474, Springer, Berlin, 1975.

\bibitem{S}  Sibony N., Une classe de domaines pseudoconvexes. Duke Math. J.  55  (1987),  no. 2, 299-319.

\bibitem{TW1} Trudinger N. S., Wang X.-J., Hessian measures. I. Topol. Methods Nonlinear Anal.  10  (1997),  no. 2, 225-239.

\bibitem{TW2} Trudinger N. S., Wang X.-J., Hessian measures. II. Ann. of Math. (2)  150  (1999),  no. 2, 579-604.

\bibitem{TW3} Trudinger N. S., Wang X.-J., Hessian measures. III. J. Funct. Anal.  193  (2002),  no. 1, 1-23.

\bibitem{walsh} Walsh J. B., Continuity of envelopes of plurisubharmonic
      functions, J. Math. Mech. 18 (1968), 143-148.

\bibitem{WanWang} Wan D., Wang W., Lelong-Jensen type formula, $k$-Hessian boundary measure and Lelong number for $k$-convex functions. J. Math. Pures Appl. (9) 99 (2013),  no. 6, 635-654.

\bibitem{WanWang2} Wan D., Wang W., Complex Hessian operator and Lelong number for unbounded $m$-subharmonic functions. Potential Anal.  44  (2016),  no. 1, 53-69.

\bibitem{Wang} Wang X.-J., The $k$-Hessian equation. Geometric analysis and PDEs.  Lecture Notes in Math., 1977, 177-252, Springer, Dordrecht, 2009.

\bibitem{wiener1} Wiener N., Certain notations in potential theory. J. Math. Phys. Mass. Inst. Tech. 3 (1924), 24-51.

\bibitem{wiener2} Wiener N., The Dirichlet problem. J. Math. Phys. Mass. Inst. Tech.  3 (1924), 127-146.

\bibitem{wiener3} Wiener N., Note on a paper by O. Perron. J. Math. Phys. Mass. Inst. Tech.  4 (1925), 31-32.

\bibitem{W} Wikstr\"om, F., Jensen measures and boundary values of plurisubharmonic functions. Ark. Mat. 39 (2001), no. 1, 181-200.

\bibitem{zaremba} Zaremba S., Sur le principe de Dirichlet. Acta Math.  34  (1911),  no. 1, 293-316.

\bibitem{zhou} Zhou B., The Sobolev inequality for complex Hessian equations. Math. Z.  274  (2013),  no. 1-2, 531-549.


\end{thebibliography}
\end{document}